\newtheorem{theorem}{Theorem}[section]
\newtheorem{lemma}[theorem]{Lemma}
\newtheorem{conjecture}[theorem]{Conjecture}
\newtheorem{corollary}[theorem]{Corollary}
\newtheorem{definition}[theorem]{Definition}
\newtheorem{question}[theorem]{Question}
\newtheorem*{claim*}{Claim}
\begin{document}

\title{Decomposing graphs into a constant number of locally irregular subgraphs}

\author{Julien Bensmail\thanks{The authors were supported by ERC Advanced Grant GRACOL, project no. 320812.}\hspace{2cm} Martin Merker\footnotemark[1]\hspace{2cm} Carsten Thomassen\footnotemark[1]\\~\\
			 Department of Applied Mathematics and Computer Science \\  Technical University of Denmark \\  DK-2800 Lyngby, Denmark}

\date{\today}

\maketitle

\begin{abstract}
A graph is  \textit{locally irregular} if no two adjacent vertices have the same degree. The  \textit{irregular chromatic index} $\chi_{\rm irr}'(G)$ of a graph $G$ is the smallest number of locally irregular subgraphs needed to edge-decompose $G$. Not all graphs have such a decomposition, but Baudon, Bensmail, Przyby{\l}o, and Wo\'zniak conjectured that if $G$ can be decomposed into locally irregular subgraphs, then $\chi_{\rm irr}'(G)\leq 3$. 
In support of this conjecture, Przyby{\l}o showed that $\chi_{\rm irr}'(G)\leq 3$ holds whenever $G$ has minimum degree at least $10^{10}$. 

Here we prove that every bipartite graph $G$ which is not an odd length path satisfies $\chi_{\rm irr}'(G)\leq 10$. This is the first general constant upper bound on the irregular chromatic index of bipartite graphs.
Combining this result with Przyby{\l}o's result, we show that $\chi_{\rm irr}'(G) \leq 328$ for every graph $G$ which admits a decomposition into locally irregular subgraphs.
Finally, we show that $\chi_{\rm irr}'(G)\leq 2$ for every $16$-edge-connected bipartite graph $G$.
\end{abstract}

\section{Introduction}

A graph $G$ is \textit{locally irregular} if any two adjacent vertices have distinct degrees.
This concept of local irregularity was investigated by Karo\'nski, {\L}uczak, and Thomason in~\cite{KLT04}, where they introduced the so-called 1-2-3 Conjecture.
An edge-weighting of $G$ is called \textit{neighbour-sum-distinguishing}, if for every two adjacent vertices of $G$ the sums of incident weights are distinct.
The least number $k$ for which $G$ admits a neighbour-sum-distinguishing edge-weighting using weights $1,2,\ldots ,k$
is denoted $\chi'_\Sigma(G)$.

\begin{conjecture}[1-2-3 Conjecture~\cite{KLT04}]
For every graph $G$ with no component isomorphic to $K_2$, we have $\chi'_\Sigma(G) \leq 3$.
\end{conjecture}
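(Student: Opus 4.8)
The plan is to build on the algorithmic framework of Kalkowski, Karo\'nski, and Pfender, who established the weaker bound $\chi'_\Sigma(G) \le 5$ by processing the vertices in a fixed order and greedily adjusting edge weights so that each newly finalized vertex receives a sum distinct from its already-finalized neighbours. First I would try to refine this greedy argument to use only the weights $\{1,2,3\}$: the quantity to control is the number of ``free'' edges at each vertex---edges to later vertices whose weights can still be modified---and the goal is to show that three available weights always leave enough room to correct a conflict at the current vertex without creating new conflicts downstream. The clean special case to attempt first, consistent with the present paper's focus, is the bipartite one, where the two-sided degree structure makes the parity bookkeeping markedly more transparent.

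A complementary approach is probabilistic. One would assign each edge a weight chosen uniformly from $\{1,2,3\}$, declare for each edge $uv$ the bad event that its endpoints receive equal sums $s_u = s_v$, and apply the Lov\'asz Local Lemma (or an entropy-compression variant) to produce a conflict-free weighting. The dependency structure is genuinely local---each bad event depends only on the edges incident to its two endpoints---so the Local Lemma is structurally applicable. The difficulty is quantitative: with only three weights the probability that a single edge is bad can be as large as roughly $1/3$ at low-degree vertices, far too large for the standard inequality $e\,p\,(d+1)\le 1$ to close against the large dependency degree $d$.

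For structured subclasses I would instead invoke the Combinatorial Nullstellensatz: encode the constraint ``adjacent sums differ'' as non-vanishing of the graph polynomial $\prod_{uv\in E}\bigl(s_u - s_v\bigr)$ in the edge-weight variables, and exhibit a top-degree monomial whose coefficient is nonzero, which then guarantees a valid assignment using any three prescribed values. This is the route by which the conjecture is already known for regular graphs and certain bipartite families, and bounding the relevant coefficient away from zero for a wider class would be the concrete target of this strand.

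The hard part, and the reason the statement is still a conjecture rather than a theorem, is precisely the tightness of three weights: each method above loses a constant factor or an additive slack that currently forces five weights in full generality. The main obstacle is to remove the last two weights without sacrificing the locality that makes the greedy, probabilistic, and algebraic arguments tractable; I therefore expect the decisive ingredient to be a new global balancing or discharging step that redistributes the accumulated slack across the whole graph, rather than repairing conflicts one vertex at a time.
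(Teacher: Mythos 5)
There is a genuine gap here, and it is total: your proposal does not prove the statement, and indeed the paper does not prove it either, because it cannot be proved by anyone at the time of writing --- the statement is the 1-2-3 Conjecture, recorded in the paper explicitly as an open conjecture (cited from Karo\'nski, {\L}uczak, and Thomason) whose best known partial result is the bound $\chi'_\Sigma(G) \leq 5$ of Kalkowski, Karo\'nski, and Pfender. What you have written is a research plan, not a proof, and each of its three strands is left incomplete for reasons you yourself identify. The greedy refinement of the Kalkowski--Karo\'nski--Pfender argument is never carried out: you name the quantity to control (free edges at each vertex) but give no invariant, no case analysis, and no argument that three weights suffice to resolve a conflict at the current vertex without propagating one forward --- this is exactly where the known proof needs five weights, and no new idea is supplied to get below that. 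The Local Lemma strand is conceded to fail quantitatively (bad-event probability near $1/3$ against unbounded dependency degree), and the Combinatorial Nullstellensatz strand is only asserted for subclasses where the conjecture was already known, with no new nonvanishing coefficient exhibited. Your final paragraph then openly admits that the decisive ingredient --- some global balancing or discharging mechanism --- is missing.

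The correct response to this assignment would have been to recognize that the statement is a conjecture, not a theorem of the paper: it appears in the introduction purely as motivation for the notion of locally irregular decompositions, and all of the paper's actual results concern the irregular chromatic index $\chi'_{\rm irr}$, not the weighting parameter $\chi'_\Sigma$. A blind attempt that honestly reports ``this is open and here is why the standard methods stall'' is fine as a survey, but it should not be framed as a proof proposal; presented as one, it has no step that can be checked, repaired, or compared against anything in the paper.
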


This conjecture is equivalent to stating that a graph can be made locally irregular by replacing some of its edges by two or three parallel edges. 
Although the 1-2-3 Conjecture has received considerable attention in the last decade, it is still an open question.
The best result so far was shown by Kalkowski, Karo\'nski, and Pfender~\cite{KKP10} who proved
$\chi'_\Sigma(G) \leq 5$ whenever $G$ has no component isomorphic to $K_2$.
For more details, we refer the reader to the survey by Seamone~\cite{Sea12} on the 1-2-3 Conjecture and related problems.

A different approach was taken by Baudon, Bensmail, Przyby{\l}o, and Wo\'{z}niak in~\cite{BBPW15}, where edge-decompositions of graphs into locally irregular graphs were studied. From now on, all graphs we consider are simple and finite.
A decomposition into locally irregular subgraphs can be regarded as an improper edge-colouring where each colour class induces a locally irregular graph.
We call such an edge-colouring \textit{locally irregular}. 
If $G$ admits a locally irregular edge-colouring, then we call $G$ \textit{decomposable}.
For every decomposable graph $G$, we define the \textit{irregular chromatic index of~$G$}, denoted by $\chi'_{\rm irr}(G)$, as the least number of colours in a locally irregular edge-colouring of $G$.
If $G$ is not decomposable, then $\chi'_{\rm irr}(G)$ is not defined and we call $G$ \textit{exceptional}. 
The following conjecture has a similar flavour to the 1-2-3 Conjecture.

\begin{conjecture}[\cite{BBPW15}] \label{conj-3}
For every decomposable graph $G$, we have $\chi'_{\rm irr}(G) \leq 3$.
\end{conjecture}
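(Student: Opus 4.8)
The plan is to attack the conjecture by splitting every decomposable graph into two regimes according to its degrees, invoking Przyby{\l}o's theorem as a black box in the dense regime and developing a \emph{tight} bipartite/sparse theory in the complementary regime. First I would normalise the problem: a locally irregular edge-colouring is obtained by colouring the components of $G$ independently, so I may assume $G$ is connected; and since the exceptional graphs have been characterised (they are the odd-length paths, the odd-length cycles, and the members of a specific family $\mathcal{T}$ of triangle-based graphs), I may assume $G$ is a connected decomposable graph to be handled by one uniform procedure. Restated, the goal is to show that the only obstructions to decomposing $G$ into at most three locally irregular subgraphs are precisely these exceptional graphs.

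Przyby{\l}o already gives $\chi'_{\rm irr}(G)\le 3$ whenever the minimum degree exceeds $10^{10}$, so the heart of the matter is graphs containing vertices of small degree. Here I would fix a threshold $T$, let $H$ be the vertices of degree at least $T$ and $L$ the low-degree vertices, and try to treat the dense core on $H$ by the first regime while decomposing the sparse periphery on $L$ directly. The periphery is locally tree-like of bounded degree, and the aim would be to \emph{sharpen} the results available here: the bipartite bound is $\chi'_{\rm irr}\le 10$ in general and $\chi'_{\rm irr}\le 2$ for $16$-edge-connected bipartite graphs, and I would try to push the constant down to $3$ for trees and for bounded-degree bipartite graphs by an explicit structure-driven colouring -- for a tree, rooting it and colouring edges level by level so that a parent and a child never acquire equal colour-degrees. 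One then re-inserts the edges joining $H$ to $L$.

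The hard part is both the interface and the passage from large constants to the optimal value $3$, and it is precisely why the conjecture remains open. When an edge joining a high-degree vertex to a low-degree vertex is added back, it can create two adjacent vertices of equal degree inside a single colour class, destroying local irregularity; controlling these collisions requires an absorption/swapping argument with no slack, unlike the averaging arguments that comfortably produce the constants $10$ and $328$. More fundamentally, the general bound $328$ is itself obtained by feeding the bipartite bound $10$ into Przyby{\l}o's machinery, so the conjecture cannot follow from any black-box combination of the assumed results: a genuinely lossless argument is needed. The decisive test case is small-degree regular (and regular bipartite) graphs, where every vertex shares the same degree and there is no room to separate colour-degrees greedily; I expect proving the bound $3$ there, and then bootstrapping to the general case, to be the main obstacle that the quantitative tools assumed above do not overcome.
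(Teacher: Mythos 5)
This statement is a \emph{conjecture} in the paper, not a theorem: the authors do not prove it, and it remains open. The paper proves only weaker results (the bounds $328$ in general, $10$ for bipartite graphs, and $2$ for $16$-edge-connected bipartite graphs), so there is no ``paper's own proof'' against which your attempt could be matched. Your text, read as a proof, has a genuine gap --- indeed, it is a research programme rather than a proof, and to your credit you concede this explicitly (``the conjecture cannot follow from any black-box combination of the assumed results''). The decisive steps are all left unestablished: the ``absorption/swapping argument with no slack'' at the interface between the dense core $H$ and the sparse periphery $L$ is never constructed, the claimed sharpening of the bipartite bound from $10$ down to $3$ (or $2$) for bounded-degree bipartite graphs is not proved, and the small-degree regular case, which you yourself single out as the main obstacle, is not addressed.

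Two concrete failure points are worth naming beyond your own caveats. First, the regime split does not even set up Przyby{\l}o's theorem correctly: a vertex of degree at least $T$ in $G$ may have almost all of its neighbours in $L$, so the induced subgraph on the high-degree vertices can have arbitrarily small minimum degree, and Theorem~\ref{theorem:przybylo} does not apply to it. This is exactly the difficulty the paper circumvents in Lemma~\ref{lemma-deg+min}, by iteratively peeling low-degree vertices and then repairing odd components via an almost-balanced orientation --- and even with that care the colour counts \emph{add} ($3$ for $H$ plus $9\cdot 36$ for $D$), because the union of two locally irregular graphs is in general not locally irregular; no mechanism in your proposal lets the two regimes share the same three colour classes. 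Second, your base case for the sparse part is already wrong as sketched: infinitely many trees have irregular chromatic index exactly $3$ (as noted in the paper, citing~\cite{BBS15}), so a ``level-by-level'' colouring of a rooted tree keeping parent and child colour-degrees distinct with a fixed small palette cannot work uniformly, and any correct tree argument must handle the tight examples. In short, the proposal correctly diagnoses why the conjecture is hard but proves nothing beyond what it assumes.
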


Every connected graph of even size can be decomposed into paths of length 2 and is thus decomposable. Hence, all exceptional graphs have odd size and a complete characterisation of exceptional graphs was given in~\cite{BBPW15}. 
To state this characterisation, we first need to define a family $\mathcal{T}$ of graphs. A connected graph $G$ belongs to $\mathcal{T}$ if and only if $G$ has a nonempty collection of triangles, $G$ has no other cycles, $G$ has maximum degree at most $3$, all vertices not in a triangle have degree at most $2$, and if $P$ is path in $G$ whose intermediate vertices all have degree $2$ in $G$ and $P$ is maximal with this property, then $P$ has odd length if and only if both ends are in triangles. 

\begin{theorem}[\cite{BBPW15}] \label{charac-exceptions}
A connected graph is exceptional, if and only if it is (1) a path of odd length, (2) a cycle of odd length, or (3) a member of $\mathcal{T}$.
\end{theorem}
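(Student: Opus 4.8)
The plan is to prove the two implications separately: that every graph in the list is exceptional, and that every connected graph outside the list is decomposable (those of even size being handled already by the remark preceding the theorem). The backbone of the lower-bound direction is the observation that every component of a locally irregular subgraph of a path or a cycle is a subpath, and that a subpath is locally irregular if and only if it has length exactly two: a longer subpath has two adjacent interior vertices of degree $2$, and a single edge is a copy of $K_2$. (For a cycle one first notes that the whole cycle, all of whose degrees equal $2$, is never locally irregular, so every locally irregular subgraph is proper and its components are arcs.) Hence every locally irregular subgraph of an odd path or an odd cycle is a disjoint union of copies of $P_3$ and therefore has even size; since these graphs have odd size, no locally irregular edge-colouring exists, settling cases (1) and (2).

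The delicate part of the lower bound is case (3), where pure size parity fails: a star $K_{1,3}$ centred at a degree-$3$ vertex is locally irregular and has odd size, and for the smallest member of $\mathcal{T}$ with a pendant path one checks by hand that splitting off this star leaves two disjoint copies of $K_2$, so that no decomposition exists. To obtain a general argument I would exploit the following structural fact about any $G\in\mathcal{T}$, whose only cycles are triangles and whose maximum degree is at most $3$: no locally irregular subgraph can contain a full triangle, since completing a triangle with one, two, or three external edges always produces two adjacent vertices of equal degree. Consequently each triangle must distribute its three edges across at least two colour classes, with every class receiving at most two of them, while on each maximal path whose interior vertices have degree $2$ a colour class again restricts to a union of $P_3$'s. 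Tracking, for each colour class, the parity of its contribution along every connecting and pendant path and around every broken triangle, and summing these local parities over the whole decomposition, I expect the defining parity conditions of $\mathcal{T}$—connecting paths of odd length, pendant paths of even length—to yield a global parity equation that the odd size of $G$ violates. This invariant is the technical heart of the lower bound.

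For the upper-bound direction I would argue by induction on $|E(G)|$, the base being the decomposition of even-size connected graphs into copies of $P_3$. If $\Delta(G)\le 2$, then $G$ is a path or a cycle, and since $G$ is assumed not to lie in the list it has even size and so decomposes into copies of $P_3$. If $\Delta(G)\ge 3$, I would pick a vertex $v$ of degree at least $3$ and split off a locally irregular subgraph near $v$—typically an odd-size star, which flips the parity—chosen so that every component of the remaining graph is either of even size or a strictly smaller graph to which the induction hypothesis applies; adding the removed colour class back then produces a locally irregular edge-colouring of $G$.

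The main obstacle, and where the genuine case analysis lives, is to guarantee that such a reduction is available exactly when $G\notin\mathcal{T}$. One must show that if no locally irregular subgraph can be split off without leaving an exceptional component—an odd path, an odd cycle, or a smaller member of $\mathcal{T}$—then the obstructions propagate through the tree-of-triangles structure and force precisely the parity clauses defining $\mathcal{T}$, so that in fact $G\in\mathcal{T}$. Matching these failure modes of the decomposition one-to-one with the clauses in the definition of $\mathcal{T}$ is the crux; once this correspondence is in place, the theorem follows by combining it with the $P_3$-decomposition of even-size graphs and the elementary parity computations of the first two paragraphs.
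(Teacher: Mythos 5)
Your disposal of cases (1) and (2) is correct and complete: every component of a locally irregular subgraph of a path (or of a proper subgraph of a cycle, the full cycle being $2$-regular and hence excluded) is a path, and such a path is locally irregular only if it has length exactly two, so every colour class has even size and an odd path or odd cycle admits no decomposition. Your observation that no locally irregular subgraph of a member of $\mathcal{T}$ contains a full triangle is also correct, since the three mutually adjacent triangle vertices would have degrees in $\{2,3\}$ and two of them would coincide. But beyond this point the proposal stops being a proof. For case (3) you only say you \emph{expect} a global parity equation, obtained by summing unspecified ``local parities'', to contradict the odd size of $G$; this invariant is never defined, and it cannot be the naive size-parity one, precisely because a colour class in a member of $\mathcal{T}$ may be a $K_{1,3}$ and hence have odd size --- as your own example of the triangle with a pendant path of length two shows. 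Exhibiting an invariant that survives this is the technical heart of the claim that members of $\mathcal{T}$ are exceptional (in \cite{BBPW15} this is done by a structural induction on how a hypothetical decomposition must behave on pendant paths and triangles), and it is missing here. Likewise, for the converse direction, the statement you call the crux --- that whenever no locally irregular subgraph can be split off leaving only even-size or inductively decomposable components, the graph must lie in $\mathcal{T}$ --- is exactly the theorem's content restated, and no part of that case analysis is carried out.

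It is also worth pointing out that the paper itself does not prove this statement: Theorem~\ref{charac-exceptions} is quoted from \cite{BBPW15}, so there is no in-paper proof to match. The closest analogue is the proof of Theorem~\ref{thm-odd}, which executes precisely the kind of analysis you defer, for two concrete candidate subgraphs (the claw and the claw with two subdivided edges): assuming neither can be deleted so as to leave only even-size components, it derives in sequence that every vertex of degree at least $3$ is a cutvertex, that all cycles are triangles and pairwise disjoint, that there is no induced claw, and finally that the pendant and connecting paths have exactly the parities defining $\mathcal{T}$. Your plan for the decomposability direction is essentially the contrapositive of that argument combined with the $P_3$-decomposition of even-size components, so it is workable in principle; but as written, both hard directions of the theorem are declared intentions rather than proofs.
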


The number~$3$ in Conjecture~\ref{conj-3} cannot be decreased to~$2$, as shown for example by cycles with length congruent to~$2$ modulo~$4$ and complete graphs.
In~\cite{BBPW15}, Conjecture~\ref{conj-3} was verified for several classes of graphs such as trees, complete graphs, and regular graphs with degree at least~$10^7$.
In~\cite{BBS15}, Baudon, Bensmail, and Sopena showed that determining the irregular chromatic index of a graph is \textsf{NP}-complete in general,
and that, although infinitely many trees have irregular chromatic index~$3$, the same problem for trees can be solved in linear time.
More recently, Przyby{\l}o~\cite{Prz15} gave further evidence for Conjecture~\ref{conj-3} by verifying it for graphs of large minimum degree.

\begin{theorem}[\cite{Prz15}] \label{theorem:przybylo}
For every graph $G$ with minimum degree at least $10^{10}$, we have $\chi'_{\rm irr}(G) \leq 3$.
\end{theorem}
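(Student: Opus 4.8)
The plan is to prove the statement directly by a probabilistic construction: colouring each connected component separately (a component of $G$ inherits $\delta\geq 10^{10}$ and the component colourings combine), I would produce an edge-colouring $c\colon E(G)\to\{1,2,3\}$ such that every colour class induces a locally irregular subgraph. Writing $d_i(v)$ for the number of edges at $v$ coloured $i$, the requirement is precisely that for every edge $uv$, if $c(uv)=i$ then $d_i(u)\neq d_i(v)$. A useful first observation is that only the colour actually assigned to an edge constrains that edge, and that, after conditioning on $c(uv)=i$, the quantity $d_i(u)-d_i(v)$ equals $X-Y$ where $X\sim\mathrm{Bin}(d(u)-1,\tfrac13)$ and $Y\sim\mathrm{Bin}(d(v)-1,\tfrac13)$ are \emph{independent}, since the contributions of the shared edge $uv$ cancel and the remaining edges at $u$ and at $v$ form disjoint sets in a simple graph. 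This reduces every local-irregularity constraint to an anti-concentration statement about a difference of two independent binomials.

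The backbone is then a uniformly random $3$-colouring of $E(G)$ together with the Lov\'asz Local Lemma (or its algorithmic Moser--Tardos form). For an edge $uv$ whose endpoints satisfy $|d(u)-d(v)|\gg\sqrt{\max(d(u),d(v))}$, the means of $X$ and $Y$ are separated by more than their combined standard deviation, so a Chernoff bound makes the collision $X=Y$ \emph{exponentially} unlikely in $\delta$; such ``unbalanced'' edges are therefore almost free. The genuinely dangerous edges are the ``balanced'' ones, for which a local central limit estimate gives only $\Pr[d_i(u)=d_i(v)]=\Theta(1/\sqrt{\delta})$. Since $\delta\geq 10^{10}$, each bad event has a small but merely polynomially small probability, and the objective is to avoid all of them simultaneously.

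The hard part is the dependency structure. The bad event for $uv$ is a function of all edges incident to $u$ or to $v$, so two bad events interfere whenever their endpoint pairs lie within distance one; in a dense graph this lets the dependency degree grow like $\Delta^2$, governed by the \emph{maximum} degree, which can be arbitrarily large compared with the gain $1/\sqrt{\delta}$. A naive symmetric Local Lemma with $e\,p\,(D+1)\le 1$ therefore fails outright, and this decoupling issue is the crux of the whole argument. To overcome it I would run the colouring in phases ordered by degree: for vertices of very large degree the anti-concentration bound is correspondingly stronger, so one can fix or ``spread'' their colour-degrees first with enormous slack, after which the residual conflict structure among the remaining balanced edges has effectively bounded dependency and the Local Lemma applies comfortably. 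Having three colours rather than two is exactly what supplies this slack: one colour class can be reserved as a correction class that locally repairs a sparse set of surviving equalities without creating new ones. I expect the bulk of the technical work — and the origin of the explicit constant $10^{10}$ — to lie in making this balance between the anti-concentration gain and the degree-driven dependency fully quantitative across all scales.
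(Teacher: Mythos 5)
This theorem is not proved in the paper at all: it is Przyby{\l}o's result, quoted from \cite{Prz15} and used as a black box in the proof of Theorem~\ref{main-theorem}. Your proposal therefore has to stand on its own as a proof of Przyby{\l}o's theorem, and it does not: it is a plan whose decisive step is missing, and you say so yourself (``I expect the bulk of the technical work \ldots''). Everything up to the crux is fine --- conditioned on $c(uv)=i$, the collision $d_i(u)=d_i(v)$ is indeed governed by a difference of independent binomials, balanced edges have collision probability $\Theta(1/\sqrt{\delta})$ at best, and the dependency degree of the bad event at $uv$ is controlled by the maximum degree $\Delta$, not by $\delta$. But the fix you offer for this mismatch is a gesture, not an argument.

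Concretely: the ``phases ordered by degree'' scheme is never defined, and its motivating claim --- that vertices of very large degree enjoy correspondingly stronger anti-concentration, leaving a residual problem of bounded dependency --- points the wrong way quantitatively. For an edge joining two vertices of degree about $D$ (say in a $D$-regular graph, where \emph{every} edge is balanced), the collision probability is $\Theta(D^{-1/2})$ while the number of interfering bad events is $\Theta(D^2)$, so the symmetric Local Lemma product $p(D+1)$ \emph{deteriorates} as $D$ grows; there is no degree scale at which the slack you invoke appears, and regular graphs of large degree already defeat the scheme as described. The ``reserved correction class'' has the same status: recolouring a single edge at $u$ changes two colour degrees at $u$ and can create fresh conflicts on all of the roughly $\Delta$ edges at $u$, so sparsity of the surviving equalities does not by itself make a repair step terminate or stay conflict-free, and nothing in the proposal rules out such cascades. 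For comparison, Przyby{\l}o's actual argument in \cite{Prz15} does not attempt a one-shot uniform random $3$-colouring: roughly, it first organizes the graph deterministically according to vertex degrees into pieces within which degrees are mutually comparable, and only then applies concentration and Local Lemma arguments, so that the dependency degree is commensurate with the probability gain --- precisely the decoupling your sketch acknowledges needing but does not supply. The missing step is thus not a technicality to be filled in; it is the content of the theorem.
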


Despite this result, Conjecture~\ref{conj-3} is still wide open, even in much weaker forms.
Until now it was not known whether there exists a constant~$c$ such that $\chi'_{\rm irr}(G) \leq c$ holds for every decomposable graph $G$. 
This was also an open problem when restricted to bipartite graphs, see~\cite{BBPW15,BBS15,BS16, Prz15}.

In this paper we show $\chi'_{\rm irr}(G) \leq 328$ for every decomposable graph $G$, hence providing the first constant upper bound on the irregular chromatic index.
The proof consists of three steps. First, we show in Section 2 that we can restrict our attention to connected graphs of even size. Notice that every connected graph of even size can be decomposed into paths of length 2 and is thus decomposable. We show that every decomposable graph $G$ of odd size contains a locally irregular subgraph $H$ such that all connected components of $G-E(H)$ have even size. In Section 3, we investigate bipartite graphs $G$ of even size and show that $\chi'_{\rm irr}(G) \leq 9$ holds in this case. Finally, in Section 4 we decompose a connected graph $G$ of even size into a graph $H$ of minimum degree $10^{10}$ and a $(2\cdot 10^{10})$-degenerate graph $D$ in which every component has even size. We use Theorem~\ref{theorem:przybylo} to decompose $H$, and we further decompose $D$ into $36$ bipartite graphs of even size. By using our result for bipartite graphs, this results in a decomposition of $G$ into $3+9\cdot 36 = 327$ locally irregular subgraphs. 

We have proved a constant upper bound on the irregular chromatic index, but there is still a significant gap between our result and the conjectured value $3$. 
We conclude this article by showing in Section 5 that $\chi'_{\rm irr}(G) \leq 2$ for every $16$-edge-connected bipartite graph~$G$.

\section{Reduction to graphs of even size} \label{section:even-size}

In this section we show that the weakening of Conjecture~\ref{conj-3}, where $3$ is replaced by a larger number, can be reduced to connected graphs of even size.
More precisely, given a decomposable graph $G$ with odd size, we can always remove a locally irregular subgraph $H$ from $G$, so that all connected components of $G - E(H)$ have even size.

\begin{lemma}\label{lemma-oneedge}
Let $G$ be a connected graph of odd size. For every vertex $v\in V(G)$ there exists an edge $e$ incident with $v$ such that every connected component of $G-e$ has even size.
\end{lemma}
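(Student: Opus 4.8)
The plan is to split on whether the edges at $v$ are bridges, and to exploit the fact that $|E(G)|$ is odd throughout. The starting observation is that deleting any single edge from $G$ leaves $|E(G)|-1$ edges in total, which is \emph{even}. So if $v$ is incident with a non-bridge edge $e$, then $G-e$ is connected and has even size, and we are immediately done. Hence the only case that needs work is when every edge incident with $v$ is a bridge.

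In that case I would first analyse the structure of $G-v$. Writing $u_1,\dots,u_d$ for the neighbours of $v$ and $e_i=vu_i$ for the incident edges, I claim that $G-v$ has exactly $d$ connected components $T_1,\dots,T_d$ with $u_i\in T_i$. Indeed, if two neighbours $u_i,u_j$ lay in the same component of $G-v$, there would be a $u_i$--$u_j$ path avoiding $v$, and together with $v$ this would close a cycle through $e_i$, contradicting that $e_i$ is a bridge. Deleting $e_i$ therefore splits $G$ into $T_i$, of size $m_i:=|E(T_i)|$, and the component containing $v$, of size $|E(G)|-1-m_i$. Since $|E(G)|-1$ is even, both components have even size precisely when $m_i$ is even, so it suffices to find one index $i$ with $m_i$ even.

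The final step is a parity count. We have $|E(G)| = d + \sum_{i=1}^d m_i$, which is odd by hypothesis. If every $m_i$ were odd, then $\sum_{i=1}^d m_i \equiv d \pmod 2$, and hence $|E(G)| \equiv 2d \equiv 0 \pmod 2$, contradicting oddness. Thus some $m_i$ is even, and the corresponding edge $e_i$ is the desired one.

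The only real obstacle is the all-bridges case; the non-bridge case is immediate, and once the decomposition of $G-v$ into the branches $T_i$ is set up, the conclusion follows from the single parity identity $|E(G)| = d + \sum_i m_i$. I expect no further complications, since simplicity and finiteness of $G$ are all that the structural claim about $G-v$ requires.
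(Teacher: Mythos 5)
Your proof is correct and takes essentially the same approach as the paper: the non-bridge case is settled by parity of $|E(G)|-1$, and in the all-bridges case you partition $E(G)$ into the branches at $v$ together with their connecting edges and find a branch of even size by a parity count. The paper phrases that last step as ``one part $E(H_e)\cup\{e\}$ of the partition of $E(G)$ must have odd cardinality,'' which is the same counting argument as your identity $|E(G)| = d + \sum_i m_i$.
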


\begin{proof}
Let $E(v)$ denote the set of edges incident with $v$. If $e\in E(v)$ is not a cut-edge, then $G-e$ is connected and of even size. We may thus assume that all edges in $E(v)$ are cut-edges. For every $e\in E(v)$, let $H_e$ denote the connected component of $G-e$ not containing $v$. Now
$$ E(G) = \bigcup_{e\in E(v)} E(H_e) \cup \{e\}\,.$$
Since $|E(G)|$ is odd, there exists $e\in E(v)$ for which $|E(H_e) \cup \{e\}|$ is odd. Thus, $H_e$ is of even size, and so is the other connected component of $G-e$.
\end{proof}

\begin{lemma}\label{lemma-pathlength2}
Let $G$ be a connected graph of even size. For every vertex $v\in V(G)$ there exists a path $P$ of length 2 containing $v$ such that every connected component of $G-E(P)$ has even size.
\end{lemma}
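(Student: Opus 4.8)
The plan is to bootstrap from Lemma~\ref{lemma-oneedge}. Since that lemma peels off a single well-chosen edge from a graph of \emph{odd} size, my strategy is to first delete one edge incident with $v$ so as to expose a connected piece of odd size, and then to apply Lemma~\ref{lemma-oneedge} to that piece to remove a second edge; the two deleted edges will together form the desired path $P$ of length $2$ through $v$. The only invariant I need to track is the parity of the number of edges in each component, together with the basic fact that deleting a cut-edge of an even-size graph splits it into one odd and one even component.

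First I would pick a neighbour $u$ of $v$ and split on whether $vu$ is a cut-edge. Suppose some neighbour $u$ can be chosen with $vu$ \emph{not} a cut-edge. Then $G-vu$ is connected and of odd size, and since it is connected with a positive number of edges, $u$ still has an incident edge in it. Applying Lemma~\ref{lemma-oneedge} to $G-vu$ at $u$ produces an edge $f=uw$ (necessarily different from $vu$, and with $w\neq v$ as $G$ is simple) for which every component of $(G-vu)-f$ has even size. Then the path $P$ with edges $vu$ and $uw$ has length $2$, passes through $v$, and $G-E(P)=(G-vu)-f$ has only even components.

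The remaining case, which I expect to be the heart of the argument, is when \emph{every} edge at $v$ is a cut-edge. Here the neighbours $u_1,\dots,u_d$ of $v$ lie in pairwise distinct components $H_1,\dots,H_d$ of $G-v$, with $u_i\in H_i$ and $|E(G)|=d+\sum_i|E(H_i)|$. A one-line parity count, using that $|E(G)|$ is even, shows that the number of $H_i$ of even size cannot be exactly $1$; so either at least two of the $H_i$ are even, or all of them are odd. If two, say $H_i,H_j$, are even, I would take the path with edges $vu_i$ and $vu_j$: its deletion frees $H_i$ and $H_j$ as separate even components and leaves the component of $v$ with size $|E(G)|-|E(H_i)|-|E(H_j)|-2$, which is again even. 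If instead all $H_i$ are odd, I would choose any one of them and apply Lemma~\ref{lemma-oneedge} inside $H_i$ at $u_i$ to get an edge $f=u_iw$ with $H_i-f$ all even; the path with edges $vu_i$ and $u_iw$ then works, since its deletion leaves the $v$-component of size $|E(G)|-|E(H_i)|-1$ (even, as $|E(H_i)|$ is odd) alongside the even components of $H_i-f$.

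The main obstacle is exactly this last configuration, where $v$ is surrounded by bridges and every branch has odd size: no path of length $2$ centred at $v$ can work, because deleting two of the bridges always strands two odd components. The resolution is to route the path one step into a single odd branch and let Lemma~\ref{lemma-oneedge}, applied inside that branch, absorb the residual odd parity there. A preliminary remark to record is that the statement presumes $G$ has more than one vertex, so that $|E(G)|\geq 2$ and a path of length $2$ through $v$ indeed exists.
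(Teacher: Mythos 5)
Your proof is correct and takes essentially the same approach as the paper: delete one edge incident with $v$, then invoke Lemma~\ref{lemma-oneedge} to remove a second, incident edge that restores even size everywhere. The paper's own proof avoids your case analysis entirely by observing that for \emph{any} edge $e$ incident with $v$, the graph $G-e$ has exactly one odd component and $e$ has an endvertex $u$ (possibly $u=v$ itself) in that component, so applying Lemma~\ref{lemma-oneedge} at $u$ handles all three of your cases at once.
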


\begin{proof}
Let $e$ be an edge incident with $v$. Then $G-e$ has precisely one connected component of odd size, and $e$ is incident with a vertex $u$ of that component, possibly $u=v$. By Lemma~\ref{lemma-oneedge} we can delete an edge $f$ incident with $u$ so that every component of $G-\{e,f\}$ has even size. Since $e$ and $f$ are incident, they form a path $P$ of length 2.
\end{proof}

\begin{theorem}\label{thm-odd}
Let $G$ be a connected graph of odd size. If $G$ is decomposable, then $G$ contains a locally irregular subgraph $H$ such that every connected component of $G-E(H)$ has even size.
\end{theorem}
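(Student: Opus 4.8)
The plan is to exhibit a single locally irregular subgraph $H$ of \emph{odd} size whose removal leaves only even components. This parity is forced: the sizes of the components of $G-E(H)$ sum to $|E(G)|-|E(H)|$, which must be even, so $|E(H)|$ is odd.

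First I would locate the source of this odd size. The only connected locally irregular graph with maximum degree at most $2$ is the path of length $2$, which has even size; hence any locally irregular graph whose components all have maximum degree at most $2$ has even size. Thus $H$ must contain a vertex of degree at least $3$, and I begin by checking that $G$ has one. A connected graph with maximum degree at most $2$ is a path or a cycle, and since $G$ has odd size it would then be a path of odd length or a cycle of odd length, both exceptional by Theorem~\ref{charac-exceptions}. As $G$ is decomposable this cannot happen, so $G$ has a vertex $v$ with $\deg(v)\geq 3$.

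The natural candidate for $H$ is a claw centred at $v$, which is locally irregular of odd size $3$. To control parities I would work in two steps. By Lemma~\ref{lemma-oneedge} applied to $v$ there is an edge $e_1=vx$ with every component of $G-e_1$ even; let $C$ be the even component containing $v$, so $\deg_C(v)\geq 2$. I then search for two further edges $vy,vz$ of $C$ for which every component of $C-\{vy,vz\}$ is even. Together with $e_1$ these form a claw at $v$; as $x,y,z$ are distinct the subgraph $H$ with edge set $\{vx,vy,vz\}$ is locally irregular, and every component of $G-E(H)$ is even. Lemma~\ref{lemma-pathlength2} applied to $(C,v)$ delivers a path of length $2$ through $v$ whose removal keeps $C$ even, and this is exactly of the required form \emph{when $v$ is its centre}.

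The hard part is that this path may instead have $v$ as an \emph{endpoint}, with edges $vw$ and $wz$; then the three edges $vx,vw,wz$ form the path on $x,v,w,z$, which fails to be locally irregular because $v$ and $w$ both have degree $2$. Adding a fourth edge at $v$ repairs local irregularity but spoils the parity, so the two demands conflict. I expect the remedy to be a case analysis governed by the components $D_1,\dots,D_k$ of $G-v$ and the numbers $m_i$ of edges from $v$ to $D_i$: when the branches meeting the claw have even size a plain claw at $v$ already succeeds, while if removing a cut edge at $v$ would orphan an odd branch one must keep that edge and instead correct the parity \emph{inside} an even branch, re-applying Lemmas~\ref{lemma-oneedge} and~\ref{lemma-pathlength2} there and, if needed, recursing on a smaller even component until a genuine claw---or a claw with one edge subdivided, which is again locally irregular---can be extracted. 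Throughout, the delicate point is to secure local irregularity at the centre while simultaneously forcing \emph{every} component, not merely the total, to have even size.
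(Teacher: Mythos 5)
Your opening is sound and matches the paper's strategy: the parity observation, the use of Theorem~\ref{charac-exceptions} to get a vertex $v$ of degree at least $3$, and the combination of Lemma~\ref{lemma-oneedge} with Lemma~\ref{lemma-pathlength2} to try to assemble a claw at $v$ (including the observation that the easy case is when $v$ is the \emph{centre} of the length-$2$ path). But the proposal has a concrete error and, more importantly, a genuine gap. The error: your fallback gadget, ``a claw with one edge subdivided,'' has four edges, so by your own first paragraph its removal can never leave all components of even size. The gadget that actually resolves the hard case is the claw with \emph{two} edges subdivided (five edges); in the paper it arises when the parity corrections forced at two independent neighbours $u_1,u_2$ of $v$ must each be carried out inside a component of $G-v$ not containing $v$, so that the two extra edges attach at $u_1$ and $u_2$ rather than at $v$.

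The gap: you use decomposability only once, to get a vertex of degree at least $3$, and then assert that a case analysis ``recursing on a smaller even component'' will always terminate by extracting a suitable gadget. That assertion is false. Consider two disjoint triangles joined by an edge: it is connected, has odd size $7$, and has vertices of degree $3$, so it survives every hypothesis your sketch actually uses; yet it belongs to $\mathcal{T}$ and is exceptional by Theorem~\ref{charac-exceptions}, and for it \emph{no} subgraph $H$ as in the theorem exists at all --- if it did, every component of $G-E(H)$ would decompose into paths of length $2$, and together with $H$ this would make $G$ decomposable, a contradiction. So your argument, if valid, would prove a false statement; decomposability must be invoked a second time at the end. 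That is exactly what the paper's proof does, and it is where all the work lies: one must show that whenever neither a claw nor a doubly subdivided claw can be removed, $G$ is forced to have the precise structure of a member of $\mathcal{T}$ (every cycle is a triangle, triangles are pairwise disjoint, there is no induced claw, the maximum degree is $3$, and the paths joining leaves and triangles have the parities in the definition of $\mathcal{T}$), whence $G$ is exceptional, contradicting decomposability. This structural analysis --- handling cycles of length at least $4$ via a length-$3$ subpath of the cycle together with $P_v$, triangles sharing a vertex, the induced-claw configuration, and the final path-parity computation --- is the bulk of the proof and is entirely absent from your sketch.
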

\begin{proof}
We show that we can choose $H$ to be isomorphic to $K_{1,3}$ or to $K_{1,3}$ where two edges are subdivided once. 
Assume that $G$ is a graph for which we cannot delete one of these two graphs such that every connected component in the resulting graph is of even size. 
If $G$ has maximum degree at most $2$ and odd size, then $G$ is exceptional. We can thus assume that $G$ has maximum degree at least 3.
Notice that every vertex $v$ of degree at least $3$ in $G$ must be a cutvertex, since otherwise we can delete a claw (i.e. a subgraph isomorphic to $K_{1,3}$) centred at $v$.

First, suppose that $G$ contains a cycle $C$. Let $V_C$ denote the vertices of $C$ with degree at least $3$.
For every $v\in V_C$, let $E_C(v)$ denote the two edges of $C$ that are incident with $v$. If $G-E_C(v)$ is connected, then we can use Lemma~\ref{lemma-oneedge} to delete one more edge at $v$ so that every connected component in the resulting graph has even size. We may thus assume that $G-E_C(v)$ is disconnected. Let $G_C(v)$ denote the connected component of $G-E_C(v)$ containing $v$. If $|E(G_C(v))|$ is odd, then we can again use Lemma~\ref{lemma-oneedge} to delete one more edge at $v$ to reach the desired conclusion. Thus we may assume that $|E(G_C(v))|$ is even for all $v\in V_C$.
By Lemma~\ref{lemma-pathlength2}, there exists a path $P_v$ of length 2 in $G_C(v)$ incident with $v$ such that every connected component of $G_C(v)-E(P_v)$
has even size. If $v$ is the middle vertex of $P_v$, then $P_v$ together with one of the two edges in $E_C(v)$ forms a claw whose removal leaves a graph where every connected component has even size. Thus, we may assume that $v$ is an endvertex of $P_v$. If $C$ has length at least 4, then let $P_C$ be a path of length 3 in $C$ in which $v$ has degree 2. The graph $P_v\cup P_C$ is locally irregular and it is easy to see that every connected component of $G-E(P_v)-E(P_C)$ has even size.

Thus we may assume that all cycles of $G$ have length 3. Suppose two cycles $C_1,C_2$ have a vertex $v$ in common. Choose an edge $e_i$ incident with $v$ in $C_i$ for $i\in \{1,2\}$. Now $G-\{e_1,e_2\}$ is connected, so we can apply Lemma~\ref{lemma-oneedge} to delete one more edge at $v$ so that every connected component has even size.

So far, we have shown that triangles are the only cycles in $G$ and that any two triangles are disjoint. Now we show that there exists no induced claw in $G$. Suppose for a contradiction that $v$ is a vertex of degree at least 3 which is a center of a claw. If $v$ is contained in a triangle, then we assume that the degree of $v$ is at least 4. Since any two triangles are disjoint, there exists at most one edge between the neighbours of $v$.
By Lemma~\ref{lemma-oneedge}, we can delete an edge $uv$ so that every component of $G'=G-uv$ has even size. By our choice of $v$, there exists two neighbours $u_1$ and $u_2$ of $v$ such that $\{u,u_1,u_2\}$ is an independent set in $G$. Let $G_1$ denote the connected component of odd size in $G'-u_1v$. If $G_1$ contains $v$, then we can delete a third edge $e$ at $v$ by Lemma~\ref{lemma-oneedge} such that all components of $G'-u_1v-e$ have even size. Thus, we can assume that $G_1$ contains $u_1$ but not $v$. Similarly, we may assume that the odd component $G_2$ of $G'-u_2v$ contains $u_2$ but not $v$. Now we can apply Lemma~\ref{lemma-oneedge} to delete an edge $e_i$ incident with $u_i$ in $G_i$ such that every connected component of $G_i-e_i$ has even size for $i\in \{1,2\}$. Thus, every connected component of $G'-e_1-e_2-u_1v-u_2v$ has even size.
Since $G_1,G_2$ are distinct components of $G-v$, the graph we removed is isomorphic to $K_{1,3}$ where two edges are subdivided once. This contradicts our choice of $G$, implying that $G$ has no induced claw.

Thus we may assume that the maximum degree in $G$ is 3 and that every vertex of degree 3 is contained in a triangle. Since there are no other cycles, this implies that the contraction of all triangles results in a tree of maximum degree 3. All that remains to show is that the parities of the path lengths are the same as for the exceptional graphs. Let $P$ be a path joining a leaf in $G$ with a triangle $C$. Let $v$ be the common vertex of $P$ and $C$. Now $P=G_C(v)$ and since $|E(G_C(v))|$ is even, the length of $P$ is even.
Finally, let $P$ be a path joining two different triangles $C_1$ and $C_2$. If $v_1$ and $v_2$ denote the endvertices, then $$|E(G)|=|E(G_{C_1}(v_1))|+|E(G_{C_2}(v_2))| - |E(P)|\,.$$ Since $|E(G_{C_1}(v_1))|$ and $|E(G_{C_2}(v_2))|$ are even and $|E(G)|$ is odd, we get that $|E(P)|$ must also be odd. This shows that $G$ is exceptional.
\end{proof}

\section{Locally irregular decompositions of bipartite graphs} \label{section:bipartite}

We now focus on the irregular chromatic index of bipartite graphs.
Recall that the only bipartite exceptional graphs are odd length paths.
In Corollary~\ref{corollary:bipodd} we show $\chi'_{\rm irr}(G) \leq 10$ for every decomposable bipartite graph $G$, which is the first constant upper bound on $\chi'_{\rm irr}$ for bipartite graphs.

If all vertices in one partition class of the bipartite graph $G$ have even degree, while the vertices in the other partition class have odd degree, then $G$ is locally irregular. The idea of the proof is to remove some well-behaved subgraphs from $G$ to obtain a graph which is very close to this structure. These well-behaved subgraphs include a particular kind of forest, which is defined as follows.

\begin{definition}
We say a forest is \textbf{balanced} if it has a bipartition such that all vertices in one of the partition classes have even degree.
\end{definition}

Since a balanced forest cannot contain an odd length path as a connected component,
it follows from~\cite{BBPW15} that $\chi'_{\rm irr}(F) \leq 3$ for every balanced forest $F$.
The characterisation of trees $T$ with $\chi'_{\rm irr}(T) \leq 2$ in~\cite{BBS15} implies that even $\chi'_{\rm irr}(F) \leq 2$ holds for balanced forests $F$.
For the sake of completeness, we present a short proof of this special case.

\begin{lemma}\label{lemma-balanced}
If $F$ is a balanced forest, then $F$ has a $2$-colouring of the edges, such that each colour induces a locally irregular graph, and, for each vertex $v$ in the partition class with no odd degree vertex, all edges incident with $v$ have the same colour. In particular, $\chi_{\rm irr}'(F)\leq 2$.
\end{lemma}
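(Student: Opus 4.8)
The plan is to prove the statement by induction on the number of edges of $F$, processing one connected component (tree) at a time and, within a tree, peeling off edges from a leaf. Let the bipartition be $(A,B)$ where $A$ is the partition class in which every vertex has even degree. I want to $2$-colour the edges so that each colour class is locally irregular and, crucially, every vertex of $A$ is monochromatic (all its incident edges receive the same colour). Maintaining this extra invariant is what makes the induction go through, since it controls the degrees on the $A$-side inside each colour class.

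First I would set up the induction. The base case of a single edge, or of a forest with no edges, is trivial. For the inductive step, pick a leaf $\ell$ of some nontrivial tree component $T$. There are two cases according to which side $\ell$ lies on. If $\ell \in A$, then since $\ell$ is a leaf it has degree $1$, which is odd, contradicting that all of $A$ has even degree; hence \emph{every leaf lies in $B$}. So let $\ell\in B$ with unique neighbour $u\in A$. Let $d=\deg_F(u)$, an even number, and let $e_1,\dots,e_d$ be the edges at $u$, one of which is $u\ell$. The idea is to remove all $d$ edges at $u$ at once: the forest $F' = F - \{e_1,\dots,e_d\}$ is still balanced (the degree of $u$ drops to $0$, and the $A$-side stays even since only $u$ was on the $A$-side of these edges), so by induction $F'$ has the desired $2$-colouring with every $A$-vertex monochromatic.

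Next I would colour the edges $e_1,\dots,e_d$. Since $u$ must be monochromatic, all $d$ of these edges must receive a single colour, say colour $1$; I will choose this colour to respect local irregularity. The key point is that in the final colouring, $u$ has degree $d$ in colour $1$ and degree $0$ in colour $2$, while each neighbour $w$ of $u$ on the $B$-side has, in colour $1$, its degree determined by how many of its incident edges were already coloured $1$ in $F'$ plus the one new edge. Because every $B$-vertex other than those adjacent to $u$ is untouched, I only need to check adjacencies along the edges $e_1,\dots,e_d$, i.e. between $u$ and its neighbours. Local irregularity of colour $1$ at the edge $uw$ requires $\deg_1(u)=d \neq \deg_1(w)$; here I can exploit that the two colours partition the edges and that I have freedom in the inductive colouring to keep the colour-$1$ degrees of the $B$-neighbours away from $d$. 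The main obstacle will be exactly this last verification: ensuring that assigning all of $u$'s edges the same colour does not create a monochromatic edge with equal endpoint degrees. I expect to handle it by strengthening the inductive hypothesis so that each neighbour $w\in B$ of $u$ can be guaranteed a colour-$1$ degree different from $d$ — for instance by tracking that $B$-vertices can always be kept locally irregular under the monochromatic-$A$ constraint — and by using the parity asymmetry between the even-degree $A$-side and the $B$-side to separate the degrees. This gives the required $2$-colouring and hence $\chi'_{\rm irr}(F)\le 2$.
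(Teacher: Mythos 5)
There is a genuine gap, and it stems from splitting at the wrong side of the bipartition. In your notation $A$ is the even-degree (monochromatic) class, and you remove all $d$ edges at a vertex $u\in A$ and then put them back in a single colour. But the neighbours $w_1,\dots,w_d$ of $u$ lie in $B$, the class whose degrees are completely unconstrained, so nothing controls their colour-$1$ degrees in $F'$: you need $d_1(w_i,F')+1\neq d$ for every $i$, and there is no parity or structural reason this holds. You acknowledge this and defer it to a ``strengthened inductive hypothesis,'' but that strengthening is exactly the missing content of the proof: the required constraints concern up to $d$ vertices that may lie in different components of $F'$, and no natural invariant of the colouring of $F'$ provides them. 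Moreover, your claim that ``I only need to check adjacencies along the edges $e_1,\dots,e_d$'' is false: adding the edge $uw$ raises the colour-$1$ degree of $w$ by one, which can create a conflict on an edge $wx$ of $F'$ coloured $1$ with $x\neq u$ — precisely when $d_1(x,F')=d_1(w,F')+1$, which is perfectly possible since $x\in A$ is monochromatic with $d_1(x,F')=d_{F'}(x)$ even. So even granting your deferred step, the verification is incomplete.

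The paper's proof does the opposite split, and that is what makes parity available. It takes a vertex $v$ of \emph{even} degree $q$ in the class that may contain odd-degree vertices (so all neighbours of $v$ lie in the monochromatic even class), splits $F$ at $v$ into $q$ edge-disjoint trees, colours each by induction, and then swaps the two colours inside some of the trees so that $v$ gets red degree exactly $1$. Then $v$'s coloured degrees are $1$ and $q-1$, both odd, while every neighbour of $v$ is monochromatic with coloured degree equal to its full degree, which is even; local irregularity at $v$ follows from parity alone, and away from $v$ it is inherited from the subtrees. Your construction forfeits this mechanism because the vertex whose edges you reassign has all its neighbours in the uncontrolled class; to repair the argument you would essentially have to exchange the roles of the two classes, i.e., adopt the paper's split-and-switch argument.
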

\begin{proof}

The proof is by induction on the number of edges of $F$. Clearly, we may assume that $F$ is connected.
Let $A$ and $B$ be the partition classes of $F$, where all vertices in $B$ have even degree. We may assume that some vertex in $A$ has even degree since otherwise we can give all edges of $F$ the same colour. Let $v$ be a vertex in $A$ of even degree $q$. We delete $v$ but keep the edges incident with $v$ and let them go to $q$ new vertices $v_1,v_2, \ldots ,v_q$ each of degree $1$. In other words, we split $F$ into $q$ new trees $T_1,T_2, \ldots ,T_q$ such that the union of their edges is the edge set of $F$. Each of the trees $T_1,T_2, \ldots ,T_q$ is balanced and has therefore a colouring of its edges in colours red and blue satisfying the conclusion of Lemma \ref{lemma-balanced}. This also gives a colouring of the edges of $F$ in colours red and blue. By switchings colours in some of the $T_i$, if necessary, we can ensure that the red degree of $v$ is $1$. This shows that also $F$ satisfies the conclusion of Lemma \ref{lemma-balanced}.
\end{proof}

Apart from balanced forests we shall also delete a subgraph which is the union of a path and an induced cycle. The following lemma gives an upper bound on the irregular chromatic index in this case.

\begin{lemma}\label{lemma-path+cycle}
Let $G$ be a bipartite graph and let $v$ be a vertex in $G$. If $G$ is the edge-disjoint union of an induced cycle $C$ through $v$ and a path $P$ starting at $v$, then $\chi_{\rm irr}'(G)\leq 4$. 
\end{lemma}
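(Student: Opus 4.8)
The plan is to give an explicit $4$-colouring of $E(G)$. Since $G$ is bipartite, the cycle $C$ has even length $2k$; write $C = c_0 c_1 \cdots c_{2k-1} c_0$ with $c_0 = v$, and write the path as $P = v p_1 p_2 \cdots p_\ell$. I would first treat the main case in which $P$ meets $C$ only at $v$, so that $G$ is an even cycle with a pendant path attached at $v$. The general case, where $P$ passes through other vertices of $C$, follows by the same argument: such vertices merely acquire larger degree, and no chord of $C$ can appear since $C$ is induced and edge-disjoint from $P$, so the extra incidences only make local irregularity easier to maintain.

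The core device is that a vertex-disjoint union of paths of length $2$ is locally irregular. Accordingly I decompose the cycle by grouping its edges into consecutive pairs $\{c_{2j-1}c_{2j},\, c_{2j}c_{2j+1}\}$, each such pair being a path of length $2$ centred at $c_{2j}$. Colouring these paths alternately with colours $1$ and $2$ makes each colour class a vertex-disjoint union of length-$2$ paths, hence locally irregular, and one checks that the colouring closes up consistently around $C$ exactly when $2k \equiv 0 \pmod 4$. The pendant path $P$ is a balanced forest when $\ell$ is even, so Lemma~\ref{lemma-balanced} supplies a locally irregular $2$-colouring of $P$; I would re-use colours $1,2$ for $P$ and coordinate them at $v$, using the freedom to switch colours guaranteed by Lemma~\ref{lemma-balanced}, so that the colour degrees of $v$ do not clash with those of its neighbours. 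When $\ell$ is odd the path is not balanced on its own, but attaching the pendant edge $vp_1$ to one cycle-pair at $v$ restores the needed parity.

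The remaining obstacle, which I expect to be the crux, is the defect that arises when $2k \equiv 2 \pmod 4$: the consecutive-pair decomposition cannot close up, leaving one vertex of $C$ incident to two like-coloured length-$2$ paths and hence adjacent to a vertex of the same colour degree. Here I would spend the extra colours $3$ and $4$ to repair the colouring locally near $v$, recolouring the two or three edges around the defect into a small locally irregular gadget (a single length-$2$ path, or such a path together with the pendant edge forming a locally irregular tree), and verify that this reassignment creates no monochromatic edge between equal-degree vertices at the boundary with the untouched part of $C$ and with $P$. Since only a bounded neighbourhood of $v$ is ever modified, this reduces to checking finitely many local degree configurations at $v$, $c_1$, $c_{2k-1}$, and $p_1$, after which all four colour classes are locally irregular and $\chi'_{\rm irr}(G) \le 4$.
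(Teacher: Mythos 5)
Your argument has a genuine gap, and it is exactly the sentence you dismiss as routine: the claim that the case where $P$ revisits vertices of $C$ ``follows by the same argument'' because extra incidences ``only make local irregularity easier to maintain''. This is false for the colouring you construct, because you re-use colours $1,2$ on both $C$ and $P$: at a shared vertex the colour degree is the \emph{sum} of its degrees in the cycle pairing and in the path pairing, and conflicts arise that your coordination at $v$ never sees. A concrete counterexample to your scheme: let $C=c_0c_1\cdots c_7c_0$ and $P=c_0p_1c_2p_3p_4c_3$, so $C$ is induced, $P$ is edge-disjoint from $C$ and revisits it at $c_2$ and $c_3$. Here $|C|\equiv 0\pmod 4$ and $P$ has odd length, so following your plan the pendant edge $c_0p_1$ is attached to the cycle pair centred at $c_0$, and the rest of $P$ is forced into the pairs $\{p_1c_2,c_2p_3\}$ and $\{p_3p_4,p_4c_3\}$ with alternating colours. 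If $\{p_1c_2,c_2p_3\}$ gets the same colour as the cycle pair at $c_0$, then $p_1$ and $c_2$ both have degree $2$ in that colour and are joined by the edge $p_1c_2$ of that colour; otherwise $\{p_3p_4,p_4c_3\}$ gets the same colour as the cycle pair at $c_4$, and then $c_3$ and $c_4$ both have degree $2$ in that colour and are joined by $c_3c_4$. This happens for every choice of the two global colour swaps, which is all the freedom you have: on a path the length-$2$ pairs are forced and their colours must alternate, and your repair step is not even triggered since $|C|\equiv 0\pmod 4$. So the reduction to the ``main case'' is not cosmetic; it is where the difficulty lives.

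The paper's proof avoids this issue by design, and this is the one idea your proposal is missing: give the two edge-disjoint parts \emph{disjoint} colour sets. The paper colours the cycle part --- $C$ itself, or $C$ plus the first edge $e$ of $P$ when $P$ has odd length, or $C$ plus the first two edges $e,f$ when $P$ has even length and $|C|\equiv 2\pmod 4$ --- with colours $\{1,2\}$, and the remaining even-length path with colours $\{3,4\}$. Since each colour class then lies entirely inside one part, its degrees are unaffected by how the parts intersect in vertices, and the inequality $\chi_{\rm irr}'(G)\leq \chi_{\rm irr}'(G_1)+\chi_{\rm irr}'(G_2)$ for an edge-disjoint union $G=G_1\cup G_2$ finishes the proof. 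Note also that shifting $e$ (or $e,f$) onto the cycle replaces your unverified ``defect repair'' in the case $|C|\equiv 2\pmod 4$: a cycle of that length with a pendant path of length $2$ admits a locally irregular $2$-colouring, which is the only computation the paper leaves to the reader, whereas your repair is the entire content of the lemma and is left unchecked (and is further complicated by the possibility $p_2\in V(C)$). If you rewrite your construction with $\{3,4\}$ on the path part instead of recycling $\{1,2\}$, your pairing argument goes through and essentially reproduces the paper's proof; as written, the colour-sharing breaks it.
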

\begin{proof}
If the length of $P$ is 0, then $\chi_{\rm irr}'(G)\leq 3$, so we may assume $P$ has positive length.
First suppose that $P$ has odd length. Let $e$ denote the edge of $P$ incident with $v$. It is easy to see that $\chi_{\rm irr}'(C+e)\leq 2$. Thus, $$\chi_{\rm irr}'(G)\leq \chi_{\rm irr}'(C+e) + \chi_{\rm irr}'(P-e) \leq 2+ 2 = 4\,.$$
Now suppose the length of $P$ is even. If the length of $C$ is divisible by $4$, then $$\chi_{\rm irr}'(G)\leq \chi_{\rm irr}'(C) + \chi_{\rm irr}'(P) \leq 2+2 = 4\,.$$ We may therefore assume the length of $C$ is congruent to $2$ modulo $4$. Let $e$ denote the edge of $P$ incident with $v$, and let $f$ denote the edge incident with $e$ on $P$. It is easy to check that if $e$, $f$ and all edges of $C$ incident to $e$ or $f$ are coloured $1$, then this colouring can be extended to a locally irregular $\{1,2\}$-edge-colouring of $C+e+f$. Thus, we have $$\chi_{\rm irr}'(G)\leq \chi_{\rm irr}'(C+e+f) + \chi_{\rm irr}'(P-e-f) \leq 2+ 2 = 4\,.$$
\end{proof}

The following lemma is well-known.

\begin{lemma}\label{lemma-pathsystem}
Let $G$ be a connected graph and let $S$ be a set of vertices. If $S$ is even, then there exists a collection of $\frac{|S|}{2}$ edge-disjoint paths in $G$ such that each vertex in $S$ is an endvertex of precisely one of them.
\end{lemma}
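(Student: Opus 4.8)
The plan is to reduce to trees and then argue by induction. Since $G$ is connected it has a spanning tree $T$, and as every path of $T$ is a path of $G$ and edge-disjointness is inherited, it suffices to prove the statement for $T$. Before starting the induction I would record one bookkeeping remark that keeps the hypothesis clean: a family of $|S|/2$ paths has exactly $|S|$ endpoints counted with multiplicity, so as soon as every vertex of $S$ is an endpoint of precisely one path, all endpoint slots are used up by $S$ and no vertex outside $S$ can be an endpoint of any path. Thus the conclusion is equivalent to saying that the multiset of path-endpoints is exactly $S$, each with multiplicity one, and I would carry this stronger formulation through the induction.

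For the induction I would pick a leaf $\ell$ of $T$ with unique incident edge $e=\ell u$ and set $T'=T-\ell$, which is again a tree; the base case $S=\emptyset$ (in particular, $T$ a single vertex) is the empty family. There are three cases. If $\ell\notin S$, I apply the induction hypothesis to $T'$ with the same set $S$ and keep the resulting paths, none of which meets $\ell$. If $\ell,u\in S$, I use the single edge $e$ as one path and apply induction to $T'$ with $S\setminus\{\ell,u\}$, then add $e$ back. If $\ell\in S$ but $u\notin S$, I apply induction to $T'$ with $S'=(S\setminus\{\ell\})\cup\{u\}$; by the bookkeeping remark exactly one of the returned paths ends at $u$, and I prepend the edge $e$ to it so that it now ends at $\ell$.

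The main obstacle is this last case, where the endpoint requirement has to be \emph{transferred} from the leaf to its neighbour and the edge $e$ then re-attached. Here the bookkeeping remark does the real work: because $u$ is an endpoint of exactly one path in the solution for $T'$, prepending $e$ makes $u$ an interior vertex of degree two on that path while promoting $\ell$ to an endpoint, so the number of paths stays $|S|/2$, edge-disjointness is preserved (the new edge $e$ lies in no other path), and one checks in each case that the final endpoint multiset is precisely $S$. An alternative, perhaps shorter, route avoids the case analysis by invoking $T$-joins: in a tree there is a unique $S$-join, namely the set of edges $f$ for which the component of $T-f$ on a fixed side contains an odd number of vertices of $S$; this join is a forest whose odd-degree vertices are exactly $S$, and any forest decomposes into edge-disjoint paths pairing up its odd-degree vertices, yielding the required $|S|/2$ paths.
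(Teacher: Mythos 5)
Your proof is correct, but it takes a genuinely different route from the paper's. The paper proves the lemma in one line by an extremal argument: among all collections of $\frac{|S|}{2}$ paths having the vertices of $S$ as endvertices (such collections exist by connectivity: pair up $S$ arbitrarily and join each pair by a path), take one of minimum total length. Edge-disjointness is then forced, by an exchange argument left implicit: if two paths $P$ from $a$ to $b$ and $Q$ from $c$ to $d$ shared an edge $uv$, re-pairing the four segments at $uv$ gives walks from $a$ to $c$ and from $b$ to $d$ of total length $|P|+|Q|-2$, and extracting paths from these walks yields a strictly shorter valid collection, a contradiction. You instead restrict to a spanning tree and induct on a leaf, carrying the strengthened invariant that the endpoint multiset is exactly $S$; your $S$-join alternative is essentially the same tree argument repackaged. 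What the paper's approach buys is brevity. What yours buys is more substantial than you may realize: since all your paths lie in a spanning tree, their union is automatically a forest. The paper actually needs this downstream, in Corollary~\ref{lemma-makeAeven}, where $F$ is defined as the union of the paths and asserted to be a balanced forest; edge-disjoint paths in a general graph can close cycles, so with the paper's proof one must additionally observe that a length-minimal collection has acyclic union (a cycle in the union could be deleted, leaving a smaller edge set whose odd-degree vertices are still exactly $S$, which again splits into paths pairing up $S$). Your construction delivers the forest property for free, and your bookkeeping remark is precisely the observation that corollary needs to ensure vertices outside $S$ receive even degree in $F$.
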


\begin{proof}
Take a collection of paths having the vertices in $S$ as endvertices for which the total length is minimal.
\end{proof}

\begin{corollary}\label{lemma-makeAeven}
If $G$ is a connected bipartite graph of even size with partition classes $A$ and $B$, then there exists a balanced forest $F$ with leaves in $A$ such that in $G-E(F)$ all vertices in $A$ have even degree.
\end{corollary}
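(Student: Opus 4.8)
The plan is to control the parities of the degrees of the vertices in $A$ by deleting a suitable system of edge-disjoint paths and then pruning the result to a forest. First I would set $S=\{a\in A : \deg_G(a) \text{ is odd}\}$. Since $G$ is bipartite with classes $A$ and $B$, every edge has exactly one endpoint in $A$, so $\sum_{a\in A}\deg_G(a)=|E(G)|$. As $|E(G)|$ is even, the number of odd-degree vertices in $A$ is even, i.e.\ $|S|$ is even. This is exactly the hypothesis needed to feed $S$ into the path lemma, and it is the reason the even-size assumption on $G$ is used.

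Next, since $G$ is connected and $|S|$ is even, I would apply Lemma~\ref{lemma-pathsystem} to obtain $\frac{|S|}{2}$ edge-disjoint paths whose endvertices are precisely the vertices of $S$, each vertex of $S$ serving as an endvertex of exactly one path. Let $F_0$ be the union of these paths. The key observation is that the $|S|$ endpoint-slots are all filled by distinct vertices of $S$, so no vertex outside $S$ is ever an endpoint. Consequently every vertex of $V(F_0)\setminus S$ appears only as an interior vertex of the paths and thus has even degree in $F_0$, while every vertex of $S$ is an endpoint of one path and an interior vertex of the others, hence has odd degree in $F_0$. In particular, since $S\subseteq A$, every vertex of $B$ lies in $V(F_0)\setminus S$ and so has even degree in $F_0$.

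The graph $F_0$ need not be acyclic, so the final step is to prune it to a forest while preserving all degree parities. Deleting the edges of a cycle decreases the degree of each of its vertices by $2$ and leaves every other degree unchanged, so it preserves every parity; repeatedly removing cycles until none remain yields a forest $F\subseteq F_0$ in which each vertex of $S$ still has odd degree (in particular remains present) and every other vertex, including every vertex of $B$, still has even degree. Using the bipartition inherited from $G$, the class $B$ then consists entirely of even-degree vertices, so $F$ is balanced; and no vertex of $B$ has degree $1$, so every leaf of $F$ lies in $A$. Finally, for each $a\in A$ we have $\deg_F(a)$ odd exactly when $a\in S$, i.e.\ exactly when $\deg_G(a)$ is odd, so $\deg_{G-E(F)}(a)=\deg_G(a)-\deg_F(a)$ is even for every $a\in A$, as required.

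I expect the main point to get right is the parity bookkeeping rather than any deep idea: one must confirm that the endpoints of the path system are exactly $S$ (this is what forces the $B$-vertices, and more generally the non-$S$ vertices, to receive even degree and therefore makes $F$ balanced with leaves in $A$), and that the cycle-pruning step genuinely preserves all parities and never removes an $S$-vertex from $F$. Everything else is a direct consequence of Lemma~\ref{lemma-pathsystem} together with the elementary degree-sum count showing $|S|$ is even.
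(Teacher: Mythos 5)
Your proof is correct and follows essentially the same route as the paper: take $S$ to be the odd-degree vertices of $A$ (even in number since $G$ has even size), apply Lemma~\ref{lemma-pathsystem}, and let $F$ be the union of the resulting paths. Your explicit cycle-pruning step and parity bookkeeping merely spell out details the paper leaves implicit (acyclicity there follows from the minimal-total-length choice in the proof of Lemma~\ref{lemma-pathsystem}), so this is a careful elaboration rather than a different argument.
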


\begin{proof}
Notice that since $G$ has even size, the number of vertices in $A$ with odd degree is even. The statement follows by choosing $S$ to be the set of odd-degree vertices in $A$, and $F$ as the union of the paths given by Lemma~\ref{lemma-pathsystem}.
\end{proof}

\begin{corollary}\label{lemma-Balmostodd}
Let $G$ be a connected bipartite graph with partition classes $A$ and $B$, and let $v$ be a vertex in $B$. If all vertices in $A$ have even degree, then there exists a balanced forest $F$ with leaves in $B$ such that in $G-E(F)$ all vertices in $B\setminus \{v\}$ have odd degree.
\end{corollary}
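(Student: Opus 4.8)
The plan is to mirror the proof of Corollary~\ref{lemma-makeAeven}, but with the roles of the two partition classes interchanged and with the distinguished vertex $v$ used to absorb a parity issue. Since all vertices of $A$ already have even degree, deleting the edges of a forest $F$ in which every vertex of $A$ has even degree will keep all $A$-degrees even; such an $F$ is exactly a balanced forest whose even class is $A$, and if in addition its leaves lie in $B$ then it is precisely the kind of forest we want. The only effect on $B$ is that the degree of a vertex $b\in B$ changes parity exactly when $\deg_F(b)$ is odd. So I would aim to produce $F$ as an edge-disjoint union of paths with both endpoints in $B$: each interior $B$-vertex then contributes an even amount to its degree in $F$ and each path-endpoint contributes~$1$, so the vertices of $B$ whose parity flips are precisely the chosen endpoints.

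First I would decide which vertices of $B$ must flip. Let $S_0$ be the set of vertices of $B\setminus\{v\}$ that have even degree in $G$; these are exactly the vertices we must turn odd, while the odd-degree vertices of $B\setminus\{v\}$ must be left untouched. To apply Lemma~\ref{lemma-pathsystem} I need an even-sized terminal set, and here the freedom to include or exclude $v$ is the whole point: exactly one of $S_0$ and $S_0\cup\{v\}$ has even cardinality, so I set $S$ to be that one. We never need to control the final parity of $v$ itself, which is precisely why $v$ may be used freely as a terminal.

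With $|S|$ even and $S\subseteq B$, Lemma~\ref{lemma-pathsystem} yields $\tfrac{|S|}{2}$ edge-disjoint paths in $G$ whose endvertices are exactly the vertices of $S$, and I take $F$ to be their union. Because both endpoints of every path lie in $B$ and $G$ is bipartite, each such path has even length, so every vertex of $A$ is an interior vertex of each path containing it and hence has even degree in $F$; thus $F$ is balanced with even class $A$, and since no vertex of $A$ can be a leaf, all leaves of $F$ lie in $B$. Finally, deleting $E(F)$ flips the parity of a vertex $b\in B$ exactly when $b\in S$: every vertex of $S_0$ becomes odd, every odd-degree vertex of $B\setminus\{v\}$ (being outside $S$) stays odd, and so all of $B\setminus\{v\}$ is odd in $G-E(F)$, as required.

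The only genuinely delicate point, exactly as in Corollary~\ref{lemma-makeAeven}, is that the union of the paths is actually a \emph{forest}. This is where the minimality built into Lemma~\ref{lemma-pathsystem} is essential: a minimum-total-length system pairing up $S$ is a minimum $S$-join, and a minimum $S$-join cannot contain a cycle, since deleting the edges of a cycle preserves all degree parities while strictly lowering the total length. Granting this, the degree bookkeeping above is routine, and the main thing to get right is the parity argument selecting $S$, which the distinguished vertex $v$ resolves automatically.
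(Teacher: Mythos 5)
Your proof is correct and follows essentially the same route as the paper: choose the even-degree vertices of $B$, use the distinguished vertex $v$ to fix the parity of the terminal set (your case split on $S_0$ versus $S_0\cup\{v\}$ is exactly the paper's ``$S\cup\{v\}$ or $S\setminus\{v\}$'' adjustment), apply Lemma~\ref{lemma-pathsystem}, and take the union of the resulting paths as $F$. The only difference is that you spell out details the paper leaves implicit (why the union is a forest, why its leaves lie in $B$, and why it is balanced), which is fine.
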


\begin{proof}
Choose $S$ as the set of even-degree vertices in $B$. If $|S|$ is odd, then we apply Lemma~\ref{lemma-pathsystem} to the set $S\cup \{v\}$ or $S \setminus \{v\}$, and if $|S|$ is even we apply Lemma~\ref{lemma-pathsystem} to the set $S$. The union of the paths is the desired balanced forest.
\end{proof}

\begin{lemma}\label{lemma-deletePath}
Let $G$ be a bipartite graph with partition classes $A$ and $B$, and let $v$ be a vertex in $B$. If all vertices in $A$ have even degree and all vertices in $B\setminus \{v\}$ have odd degree, then there exists a path $P$ starting in $v$ such that $G-E(P)$ is locally irregular.
\end{lemma}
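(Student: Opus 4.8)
The plan is to use that the hypotheses place $G$ just one vertex away from being locally irregular, and then to repair that single defect by deleting a carefully chosen path. Since every vertex of $A$ has even degree and every vertex of $B\setminus\{v\}$ has odd degree, any two adjacent vertices avoiding $v$ lie in opposite classes and therefore have different parities, hence different degrees. Thus the only edges that can violate local irregularity are those incident with $v$. In particular, if $\deg(v)$ is odd then $v$ also differs in parity from all of its neighbours and $G$ is already locally irregular, so I would simply take $P$ to be the trivial path on the single vertex $v$. The substance of the proof is the case where $\deg(v)$ is even.

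In that case I would construct $P$ greedily, by \emph{chasing conflicts}. Set $v_0=v$, and having built a simple path $v_0v_1\cdots v_i$ work in $G_i:=G-E(v_0\cdots v_i)$. The driving observation is a parity invariant: each internal vertex of the path loses two incident edges and keeps its parity, the starting vertex $v_0$ loses exactly one edge and so flips from even to the desired odd parity, and only the current endpoint $v_i$ carries the ``wrong'' parity. Consequently, in $G_i$ the only adjacent pairs that can share a degree are the edges at $v_i$. So if $v_i$ has no neighbour of equal degree in $G_i$, I stop and output $P=v_0\cdots v_i$; otherwise I choose such a neighbour $v_{i+1}$ and extend the path along the edge $v_iv_{i+1}$.

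The key is an invariant that controls this process: the current degree of the moving endpoint strictly decreases by one at every extension. Indeed, when I extend from $v_i$ (with current degree $d_i$) to an equal-degree neighbour $v_{i+1}$, deleting the edge $v_iv_{i+1}$ drops both endpoints to degree $d_i-1$, so the new endpoint has current degree $d_i-1$. Hence the endpoint degrees run through the strictly decreasing sequence $\deg(v),\deg(v)-1,\dots$, which forces termination after at most $\deg(v)$ steps (at the latest the endpoint becomes isolated and has no conflict). The same monotonicity keeps $P$ simple: the current degrees of the already-used vertices $v_0,\dots,v_{i-1}$ are all at least $d_i$, with equality only for the immediate predecessor $v_{i-1}$, whose joining edge has already been removed; so the equal-degree neighbour I move to is necessarily a vertex not yet on $P$.

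Finally, when the chase stops with endpoint $w$, the graph $G-E(P)$ has exactly one vertex of the ``wrong'' parity, namely $w$, and $w$ has no neighbour of equal degree; every other adjacent pair differs in parity. Therefore every pair of adjacent vertices has distinct degrees and $G-E(P)$ is locally irregular, as required. (If $G$ is disconnected the chase stays inside the component of $v$, while every other component already has all $A$-degrees even and all $B$-degrees odd and is locally irregular, so nothing else needs checking.) I expect the main obstacle to be exactly the bookkeeping that guarantees the greedy path neither cycles nor stalls; this is precisely what the strictly-decreasing-endpoint-degree invariant secures, and it is the heart of the argument.
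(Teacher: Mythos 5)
Your proof is correct and is essentially the paper's own argument: the same greedy conflict-chasing construction of $P$, with termination and simplicity both secured by the fact that the endpoint's current degree strictly decreases at each step. You in fact spell out the simplicity bookkeeping (why the equal-degree neighbour cannot be an already-used vertex) in more detail than the paper does.
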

\begin{proof}
If $v$ has odd degree, then we can choose $P$ as a path of length 0. If $v$ has even degree and $G$ is not locally irregular, then $v$ is adjacent to a vertex $u_1$ of the same degree. We choose the edge $vu_1$ as the first edge of $P$ and define $G_1=G-vu_1$. If $G_1$ is not locally irregular, then $u_1$ is adjacent to a neighbour $u_2$ of the same degree. In this case we extend $P$ by the edge $u_1u_2$ and define $G_2=G_1-u_1u_2$. We continue like this, defining $G_{i+1}$ if $G_i$ is not locally irregular by deleting a conflict edge $u_iu_{i+1}$. We claim that this process stops with a locally irregular graph $G_k$ and that the deleted edges form a path. Notice that if $G_i$ is not locally irregular, then $u_i$ is incident to a vertex $u_{i+1}$ of the same degree. Moreover, the degree of $u_i$ in $G_i$ is $d(v)-i$, so the degrees $d(u_i)$ form a decreasing sequence. In particular, $u_i\neq u_j$ for $i\neq j$ and $u_i\neq v$ for all $i$. Thus, eventually the process stops with a locally irregular graph $G_k$ and $G-E(G_k)$ is a path of length $k$.
\end{proof}

\begin{lemma}\label{lemma-Aiseven}
Let $G$ be a bipartite graph with partition classes $A$ and $B$. If all vertices in $A$ have even degree, then $\chi_{\rm irr}'(G)\leq 7$.
\end{lemma}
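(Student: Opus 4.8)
The plan is to exploit the fact that a bipartite graph in which every vertex of $A$ has even degree and every vertex of $B$ has odd degree is automatically locally irregular, since each edge then joins an even-degree vertex to an odd-degree vertex and can never be a conflict. So the goal is to peel off a few well-behaved subgraphs that push the degrees in $B$ to odd parity while preserving the even parity in $A$, leaving a locally irregular remainder. First I would reduce to the case that $G$ is connected. Because $|E(G)|=\sum_{a\in A}\deg(a)$ is a sum of even numbers, every component of $G$ has even size, and I may treat components independently: the colour classes I build will be a balanced forest, a union of paths (or paths together with cycles), and a locally irregular graph, and each of these families is closed under disjoint union. Indeed a disjoint union of balanced forests is balanced, a disjoint union of locally irregular graphs is locally irregular, and $\chi_{\rm irr}'$ of a disjoint union is the maximum over the parts; hence a palette that works componentwise works for all of $G$.

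For connected $G$ I would apply Corollary~\ref{lemma-Balmostodd} with an arbitrary $v\in B$ to remove a balanced forest $F$ whose leaves lie in $B$. Since the leaves of $F$ are in $B$, every vertex of $A$ is internal to $F$ and loses an even number of edges, so $A$ still consists of even-degree vertices, while every vertex of $B\setminus\{v\}$ now has odd degree. By Lemma~\ref{lemma-balanced} the forest $F$ costs at most $2$ colours. If $v$ also has odd degree in $G-E(F)$, then $G-E(F)$ is already locally irregular and I finish with $2+1=3$ colours. Otherwise $v$ has even degree, and I apply Lemma~\ref{lemma-deletePath} to obtain a path $P$ starting at $v$ such that $G-E(F)-E(P)$ is locally irregular, which accounts for one further colour.

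The remaining task, and the step I expect to be the main obstacle, is to colour the path $P$. If $P$ has even length then it is a balanced forest (its intermediate class of vertices all have degree $2$), so Lemma~\ref{lemma-balanced} colours it with $2$ colours and the total is $2+2+1=5$. The genuine difficulty is that $P$ may have odd length: then $P$ ends in $A$, it is an exceptional graph, and it cannot be decomposed on its own, so I cannot simply allocate it a separate pair of colours. To circumvent this I would avoid isolating $P$, and instead attach to it an induced cycle through $v$ that is edge-disjoint from $P$, turning the deleted object into a path-plus-cycle to which Lemma~\ref{lemma-path+cycle} applies with at most $4$ colours. The delicate points here are to exhibit such a cycle, to keep it edge-disjoint both from the forest $F$ and from the locally irregular remainder, and to verify that deleting the combined object still leaves a locally irregular graph; this is exactly where the extra colours are spent.

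Finally I would assemble the pieces and account for connectivity. Across all components the forests combine into a single balanced forest ($2$ colours), the locally irregular remainders combine into one locally irregular graph ($1$ colour), and the various fixing objects—even paths with $\chi_{\rm irr}'\le 2$ and path-plus-cycles with $\chi_{\rm irr}'\le 4$, all pairwise vertex-disjoint—combine into a subgraph of irregular chromatic index at most $4$. Summing the worst case over the three families yields $\chi_{\rm irr}'(G)\le 2+4+1=7$, as required.
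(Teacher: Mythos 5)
Your overall plan follows the paper's strategy --- peel off a balanced forest via Corollary~\ref{lemma-Balmostodd}, fix the single bad vertex $v$ with Lemma~\ref{lemma-deletePath}, and rescue the problematic odd path by pairing it with a cycle so that Lemma~\ref{lemma-path+cycle} applies --- but the two points you flag as ``delicate'' are exactly where the content of the proof lies, and your order of operations makes the main one fail. You apply Lemma~\ref{lemma-deletePath} to $G'=G-E(F)$ \emph{first}, obtaining a path $P$ such that $G'-E(P)$ is locally irregular, and only then look for a cycle $C$ through $v$ edge-disjoint from $P$. Any such $C$ necessarily lies inside $G'-E(P)$, and deleting a cycle from a locally irregular graph need not leave a locally irregular graph: all degrees along $C$ drop by $2$, which preserves parities but can create new conflicts. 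Concretely, if $P$ has odd length, its other endpoint $u\in A$ has odd degree in $G'-E(P)$; a neighbour $w\in B$ of $u$ with $d(w)=d(u)+2$ that lies on $C$ (while $u$ does not) becomes a conflict once $C$ is removed. The paper performs the removals in the opposite order: it deletes a shortest (hence induced) cycle $C$ through $v$ from the component $H$ containing $v$ first --- this preserves the parity hypotheses, namely all of $A$ even and all of $B\setminus\{v\}$ odd --- and only then applies Lemma~\ref{lemma-deletePath} to $H-E(C)$, so that the greedy path deletion is the \emph{last} step and its local-irregularity guarantee applies to the final remainder $H-E(C)-E(P)$. This reversal of order is the key idea your proposal is missing.

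The second gap is that a cycle through $v$ may not exist at all, and you give no argument for that case; note that $P$ can have odd length even when $v$ lies on no cycle, so your even-length case analysis does not cover it. The paper treats this case separately: if every edge at $v$ is a cut-edge, pick one such edge $e$ and let $H_1\ni v$ and $H_2$ be the two components of $H-e$; then $H_1$ and $H_2+e$ are both locally irregular, since $v$ has odd degree in each while its neighbours, being in $A$, have even degree, giving $\chi_{\rm irr}'(G)\le 2+1+1=4$ in this case. Your reduction to connected graphs, the bound of $3$ when $v$ ends up with odd degree, and the final count $2+1+4=7$ all agree with the paper, but without these two repairs the argument does not go through.
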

\begin{proof}
We may assume that $G$ is connected. By Lemma~\ref{lemma-Balmostodd}, we can delete a balanced forest $F$ with leaves in $B$ such that in the resulting graph $G'$ there is at most one vertex of even degree in $B$, say $v$. If $v$ does not exist or if $v$ is an isolated vertex in $G'$, then $G'$ is locally irregular and $\chi_{\rm irr}'(G)\leq \chi_{\rm irr}'(F) + \chi_{\rm irr}'(G') \leq 3$. Thus, we may assume that $v$ exists. Notice that $G'$ might consist of several connected components, but every component not containing $v$ is locally irregular. Let $H$ denote the connected component of $G'$ containing $v$.

If there exists no cycle through $v$ in $H$, then all edges incident with $v$ are cut-edges. Let $e$ be an edge incident with $v$, and let $H_1$ and $H_2$ denote the two connected components of $H-e$. We may assume that $H_1$ contains $v$. Notice that the degree of $v$ in $H_1$ and in $H_2+e$ is odd, while the degrees of its neighbours are even. It follows that both $H_1$ and $H_2+e$ are locally irregular and hence $$\chi_{\rm irr}'(G)\leq \chi_{\rm irr}'(F) + \chi_{\rm irr}'(H_1) + \chi_{\rm irr}'(H_2+e)\leq 4\,.$$

Thus, we may assume that there exists a cycle going through $v$. Let $C$ be a cycle through $v$ of shortest length and set $H'=H-E(C)$. Since the parities of the degrees remain unchanged, the vertex $v$ is still the only vertex in $B$ that could have positive even degree in $H'$, while all vertices in $A$ have even degree.
By Lemma~\ref{lemma-deletePath}, there exists a path $P$ in $H'$ starting in $v$ such that $H'-E(P)$ is locally irregular. Now $\chi_{\rm irr}'(C\cup P) \leq 4$ by Lemma~\ref{lemma-path+cycle} and we have
$$\chi_{\rm irr}'(G)\leq \chi_{\rm irr}'(F) + \chi_{\rm irr}'(H'-E(P)) + \chi_{\rm irr}'(C\cup P)\leq 2+1+4=7\,.$$
\end{proof}

We are now ready for the main result of this section.

\begin{theorem}\label{thm-bipeven}
If $G$ is a connected bipartite graph of even size, then $\chi_{\rm irr}'(G)\leq 9$.
\end{theorem}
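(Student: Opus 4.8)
The plan is to use Corollary \ref{lemma-makeAeven} to first clean up the $A$-side of the graph, and then invoke Lemma \ref{lemma-Aiseven} on what remains. Concretely, let $A$ and $B$ be the partition classes of $G$. Since $G$ is connected and of even size, Corollary \ref{lemma-makeAeven} gives a balanced forest $F$ with leaves in $A$ such that in $G' := G - E(F)$ every vertex in $A$ has even degree. The forest $F$ satisfies $\chi_{\rm irr}'(F) \le 2$ by Lemma \ref{lemma-balanced}. Now $G'$ is a bipartite graph in which all vertices of the class $A$ have even degree, so Lemma \ref{lemma-Aiseven} applies and yields $\chi_{\rm irr}'(G') \le 7$.

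Combining these two decompositions, I would colour $F$ with its own palette of at most $2$ colours and $G'$ with a disjoint palette of at most $7$ colours, giving
\begin{equation*}
\chi_{\rm irr}'(G) \le \chi_{\rm irr}'(F) + \chi_{\rm irr}'(G') \le 2 + 7 = 9.
\end{equation*}
The subtle point to check is that this additive combination is legitimate: an edge-colouring of $F$ in which each colour class induces a locally irregular graph, taken together with such a colouring of $G'$ over a disjoint set of colours, is a locally irregular edge-colouring of all of $G$, since each colour class of the union is a colour class of exactly one of $F$ or $G'$ and hence locally irregular. Because $E(F)$ and $E(G')$ partition $E(G)$, the two colourings together colour every edge exactly once.

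I expect the proof to be essentially immediate once the preceding lemmas are in place, so the main conceptual work has already been front-loaded into Corollary \ref{lemma-makeAeven} and Lemma \ref{lemma-Aiseven}; the only thing to be careful about is bookkeeping the palettes so that the bound is genuinely additive and no colour is reused across the two parts. One might worry about whether $G'$ is connected or whether $v$-type exceptional components arise, but Lemma \ref{lemma-Aiseven} is stated without a connectivity hypothesis on its input beyond what its own proof handles internally, so feeding it the possibly-disconnected $G'$ causes no difficulty. Hence no separate obstacle remains, and the stated bound of $9$ follows directly.
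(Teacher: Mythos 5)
Your proof is correct and is essentially identical to the paper's own proof: delete the balanced forest $F$ given by Corollary~\ref{lemma-makeAeven}, bound it by $\chi_{\rm irr}'(F)\leq 2$ via Lemma~\ref{lemma-balanced}, bound the remainder by $\chi_{\rm irr}'(G')\leq 7$ via Lemma~\ref{lemma-Aiseven}, and add the two bounds using disjoint palettes. Your side remarks (disjoint colour sets make the bound additive, and Lemma~\ref{lemma-Aiseven} tolerates a disconnected $G'$ since its proof reduces to components) are accurate and only make explicit what the paper leaves implicit.
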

\begin{proof}
By Lemma~\ref{lemma-makeAeven}, we can delete a balanced forest $F$ of $G$ so that the degrees in $A$ in the resulting graph $G'$ are even. By Lemma~\ref{lemma-balanced} we have $\chi_{\rm irr}'(F)\leq 2$, and $\chi_{\rm irr}'(G')\leq 7$ follows from Lemma~\ref{lemma-Aiseven}. Thus $\chi_{\rm irr}'(G)\leq \chi_{\rm irr}'(F) + \chi_{\rm irr}'(G') \leq 2+7 =9$.
\end{proof}

\begin{corollary} \label{corollary:bipodd}
If $G$ is a connected bipartite graph and not an odd length path, then $\chi_{\rm irr}'(G)\leq 10$.
\end{corollary}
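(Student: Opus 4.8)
The plan is to split into two cases according to the parity of the size (number of edges) of $G$, using Theorem~\ref{thm-bipeven} as the workhorse in both. If $G$ has even size, then Theorem~\ref{thm-bipeven} applies directly and gives $\chi_{\rm irr}'(G) \leq 9 \leq 10$, so there is nothing more to do.

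The interesting case is when $G$ has odd size. First I would establish that $G$ is decomposable. Since $G$ is bipartite it contains no odd cycle, so by Theorem~\ref{charac-exceptions} the only way $G$ could be exceptional is if it were an odd length path: cycles of odd length and members of $\mathcal{T}$ all contain odd cycles (the triangles in $\mathcal{T}$ in particular), which bipartiteness forbids. As $G$ is assumed not to be an odd length path, $G$ is decomposable. Now Theorem~\ref{thm-odd} supplies a locally irregular subgraph $H \subseteq G$ such that every connected component of $G - E(H)$ has even size, and I would use $H$ as a single colour class.

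It then remains to colour $G - E(H)$ with $9$ further colours. Each connected component of $G - E(H)$ is a subgraph of $G$, hence bipartite, and it has even size; the edgeless components may be ignored, and to each nontrivial component Theorem~\ref{thm-bipeven} gives a locally irregular edge-colouring with at most $9$ colours. Since distinct components are vertex-disjoint, I can reuse the same palette of $9$ colours across all of them: the disjoint union of locally irregular graphs is again locally irregular, so each of the $9$ colour classes remains locally irregular on all of $G - E(H)$. Together with the colour used for $H$, this yields a locally irregular edge-colouring of $G$ with at most $9 + 1 = 10$ colours, as required.

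I do not expect a serious obstacle, since the real work has been done in the preceding results. The one point needing care is the odd-size reduction: one must invoke the characterisation of exceptional graphs to conclude decomposability, and then observe that a single colour for $H$ plus the shared $9$-colour palette suffices. The key efficiency is that colours are \emph{reused} across the vertex-disjoint even-size components rather than summed component by component; this is exactly what keeps the total at $10$ instead of blowing up with the number of components.
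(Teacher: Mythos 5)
Your proof is correct and is essentially the paper's own argument, merely spelled out: the paper likewise notes that odd length paths are the only exceptional bipartite graphs (so $G$ is decomposable), handles even size via Theorem~\ref{thm-bipeven}, and handles odd size by combining Theorem~\ref{thm-odd} with Theorem~\ref{thm-bipeven}, reusing one palette of $9$ colours across the even-size components plus one colour for the removed locally irregular subgraph. No gaps; your explicit treatment of decomposability and colour reuse is exactly what the paper's ``follows immediately'' compresses.
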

\begin{proof}
Since paths of odd lengths are the only exceptional bipartite graphs, this follows immediately from Theorems~\ref{thm-odd} and~\ref{thm-bipeven}.
\end{proof}

\section{Locally irregular decompositions of degenerate graphs} \label{section:degenerate}

Here we apply the result from the previous section by decomposing degenerate graphs into bipartite graphs of even size. 
We show that every connected $d$-degenerate graph of even size can be decomposed into at most $\lceil \log_2 (d+1) \rceil + 1$ bipartite graphs
whose components all have even size. The proof makes repeated use of the following easy lemma.

\begin{lemma}\label{lemma-observ}
If $G$ is a graph with a vertex $v$ such that $G-v$ is bipartite, then there exists a set $E$ of at most $\lfloor \frac{d(v)}{2} \rfloor$ edges incident with $v$ such that $G-E$ is bipartite.
\end{lemma}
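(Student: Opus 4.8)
The plan is to exploit the freedom in how we assign $v$ to the two sides of a bipartition. First I would fix a bipartition $(X,Y)$ of the bipartite graph $G-v$. Since $G$ is simple, the $d(v)$ neighbours of $v$ are distinct vertices of $G-v$; let $a$ of them lie in $X$ and $b$ of them lie in $Y$, so that $a+b=d(v)$.

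Next I would observe that there are two natural ways to extend $(X,Y)$ to a bipartition of all of $G$: place $v$ on the $X$-side, or on the $Y$-side. If $v$ is placed in $X$, then the only edges violating the bipartition are the $a$ edges joining $v$ to its neighbours in $X$, since every other edge of $G$ already lies between $X$ and $Y$; deleting these $a$ edges yields a graph that is bipartite with parts $X\cup\{v\}$ and $Y$. Symmetrically, placing $v$ in $Y$ and deleting the $b$ edges from $v$ to $Y$ yields a bipartite graph with parts $X$ and $Y\cup\{v\}$. Choosing whichever of the two sides requires fewer deletions, we obtain a set $E$ of $\min(a,b)$ edges, all incident with $v$, whose removal makes $G$ bipartite.

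It then remains only to bound the size of $E$, which follows from the elementary inequality $\min(a,b)\leq \lfloor (a+b)/2\rfloor = \lfloor d(v)/2\rfloor$. I expect essentially no obstacle here: the whole argument is a one-step pigeonhole on the two sides of a fixed bipartition. The only point worth flagging is that when $G-v$ is disconnected one has additional freedom, since the bipartitions of distinct components can be flipped independently to move more neighbours of $v$ onto a single side and thereby reduce the count; however, this extra freedom is not needed to obtain the stated bound of $\lfloor d(v)/2\rfloor$.
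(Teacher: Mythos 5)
Your proof is correct and is essentially the same argument as the paper's: the paper likewise fixes a bipartition of $G-v$, notes by pigeonhole that one partition class contains at most $\lfloor d(v)/2 \rfloor$ neighbours of $v$, and deletes the edges from $v$ to that class. The only difference is that you spell out the two placements of $v$ explicitly, which the paper leaves implicit.
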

\begin{proof}
Since $G-v$ is bipartite, there exists a partition class containing at most $\lfloor \frac{d(v)}{2} \rfloor$ neighbours of $v$. Deleting all edges in $G$ from $v$ to these vertices results in a bipartite graph.
\end{proof}

\begin{lemma}\label{lemma-indstep}
Let $d$ be an even natural number, $\ell\geq \lceil\log_2 d\rceil +1$, and $v$ a vertex of degree $d$ in a graph $G$. If $G-v$ is the edge-disjoint union of $\ell$ bipartite graphs in which every component has even size, then so is $G$. 
\end{lemma}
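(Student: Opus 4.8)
The plan is to keep the given decomposition $G_1,\dots ,G_\ell$ of $G-v$ fixed as a starting point and to distribute the $d$ edges incident with $v$ among the classes, rerouting edges of $G-v$ only when forced to. The first thing I would record is that the even-size condition becomes a clean parity requirement. If $S_i$ denotes the set of edges at $v$ placed in class $i$, then adding them to $G_i$ merges exactly the components of $G_i$ met by $S_i$, together with $v$, into a single new component whose size is the (even) total size of those components plus $|S_i|$, while every other component of $G_i$ is untouched. Hence class $i$ keeps all components of even size if and only if $|S_i|$ is even. So the whole task reduces to splitting the $d$ edges into classes of even size so that, within each class, the chosen neighbours of $v$ can be joined to $v$ without destroying bipartiteness, i.e.\ within every component of $G_i$ they lie on a common side.

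The engine is Lemma~\ref{lemma-observ}, applied repeatedly in a halving fashion. Processing the classes one at a time, I maintain the invariant that the number $r$ of still-unplaced edges at $v$ is even, starting from the even number $d$. Given a class $G_i$ and the $r$ unplaced edges, Lemma~\ref{lemma-observ} provides a bipartiteness-preserving choice keeping at least $\lceil r/2\rceil=r/2$ of them; using the freedom of which side to keep in each component, I keep an even number that is at least $r/2-1$, so the count of unplaced edges drops to at most $r/2+1$ and stays even. Solving $r_k\le r_{k-1}/2+1$ with $r_0=d$ yields $r_k\le 2+(d-2)/2^{k}$, so after at most $\lceil\log_2 d\rceil$ classes at most two edges at $v$ remain unplaced. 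This is precisely where the hypothesis $\ell\ge\lceil\log_2 d\rceil+1$ enters: one further class is reserved to absorb the final pair.

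It then remains to place the last two edges $vu_1,vu_2$. If some class (used or reserved) contains $u_1,u_2$ in distinct components, or in a common component on the same side, I add both edges there: the count $2$ is even and bipartiteness is preserved, which finishes the proof. The genuinely hard case, which I expect to be the main obstacle, is when $u_1$ and $u_2$ are joined by an odd path in \emph{every} class, so that no class can receive both edges by mere reassignment. I would stress that the even-size hypothesis does \emph{not} exclude this configuration, so pure reassignment is provably insufficient and a local re-decomposition is unavoidable.

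To resolve the hard case I would take an odd $u_1$--$u_2$ path $P$ in the reserved class $G_\ell$, note that $P$ together with $vu_1$ and $vu_2$ closes into an odd cycle, and reroute a bounded number of edges of $P$ between $G_\ell$ and one already-processed class so that $u_1$ and $u_2$ become co-sided in the class that is to receive both $v$-edges. The delicate point, and the crux of the whole argument, is to carry out this rerouting so that every affected component in both classes retains even size while remaining bipartite; here the even degree of $v$ and the slack of the reserved class are exactly what make the bookkeeping close. Once the final pair is placed, reassembling the classes gives the desired decomposition of $G$ into $\ell$ bipartite graphs with all components of even size.
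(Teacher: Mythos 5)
Your greedy halving phase is sound and is essentially the paper's inductive step in disguise: the paper also uses Lemma~\ref{lemma-observ} to split off roughly half of the edges at $v$ into one class and recurses on the rest, so that all of the difficulty is concentrated in the case of two remaining edges (the paper's base case $d=2$). But your proof has a genuine gap exactly at what you yourself call ``the crux of the whole argument'': the case where $u_1$ and $u_2$ are joined by an odd path in every class. There you only assert that some bounded rerouting of edges between the reserved class and a processed class will make $u_1,u_2$ co-sided while keeping all components of even size; you give no construction and no proof that such a rerouting exists. Worse, your own parity observation --- that each class must receive an \emph{even} number of edges at $v$ --- is precisely what any such rerouting destroys: moving an edge of the odd path from one class to another creates an odd-size component in \emph{both} classes, so the final placement cannot respect your even-parity invariant at all.

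The paper's resolution shows what is actually needed. Take the odd $u_1$--$u_2$ path $P=v_0v_1\ldots v_k$ in one class (say class 2). If every consecutive pair $v_i,v_{i+1}$ were joined by an even path in class 1, concatenation would give an even $u_1$--$u_2$ walk in class 1, contradicting the bipartiteness of class 1, since class 1 also contains an odd $u_1$--$u_2$ path. Hence one can choose the \emph{minimal} $i$ such that no even class-1 path joins $v_i$ and $v_{i+1}$, and move the single edge $v_iv_{i+1}$ into class 1: minimality guarantees class 1 stays bipartite, and the move creates exactly one odd-size component in each class, with the class-1 odd component containing both $u_1$ and $u_2$ and the class-2 odd component containing at least one of them. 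One then adds \emph{one} edge at $v$ to each class (an odd number per class), which restores even sizes everywhere. So the hard case is resolved not by restoring your invariant but by deliberately breaking it in a controlled way, using a minimality argument you do not have. Without this step (or an equivalent one), your proof is incomplete.
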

\begin{proof}
Notice that it suffices to prove the statement for $\ell = \lceil\log_2 d\rceil +1$.
We use induction on $d$. In the case $d=2$ we colour $G-v$ with colours 1 and 2 so that the monochromatic connected components are bipartite subgraphs of even size. Let $u_1, u_2$ be the neighbours of $v$ in $G$. If $u_1$ and $u_2$ are not connected by an odd length path in colour 1, then colouring both $vu_1$ and $vu_2$ with colour 1 will keep all monochromatic components bipartite and of even size. Thus, we may assume that $u_1$ and $u_2$ are connected by a monochromatic path of odd length in each colour. Let $P=v_0v_1\ldots v_k$ be a monochromatic path from $u_1$ to $u_2$ in colour 2, so $v_0=u_1$ and $v_k=u_2$. Suppose that for every $i\in \{0,\ldots ,k-1\}$ there exists an even length path in colour 1 from $v_i$ to $v_{i+1}$. By concatenating them, we get a walk of even length from $v_0$ to $v_k$. Since there is also a path of odd length joining $v_0$ and $v_k$ in colour 1, this contradicts the assumption that the subgraph in colour 1 is bipartite.
Thus, there exists $i\in \{0,\ldots ,k-1\}$ for which there is no even length path in colour $1$ from $v_i$ to $v_{i+1}$. Choose $i$ minimal with this property. We change the colour of $v_iv_{i+1}$ to colour 1. By the choice of $i$, all monochromatic components in colour 1 are still bipartite. Now there exists precisely one monochromatic component of odd size in each colour. Notice that the monochromatic component of odd size in colour 1 is incident with both $u_1$ and $u_2$, while the one in colour 2 is incident with at least one of $u_1$ and $u_2$. Thus, we can colour one of the edges at $v$ with colour 2 so that all monochromatic components in colour 2 are bipartite and of even size. Colouring the other edge at $v$ with colour 1 yields the desired decomposition.

Now suppose $d\geq 4$ and that the statement is true for all smaller even numbers. Set $d'=\frac{d}{2}$ if $d$ is divisible by 4, and $d'=\frac{d}{2}+1$ otherwise. Notice that $d'$ is even and $\lceil\log_2 d\rceil = \lceil\log_2 d'\rceil +1$. Let $\mathcal{H}$ be the collection of $\lceil\log_2 d\rceil +1$ bipartite graphs in $G-v$ with even component sizes. Choose $H\in \mathcal{H}$ and denote by $G_H$ the graph we get by adding $v$ and all its incident edges to $H$. By Lemma~\ref{lemma-observ}, there exists a set $E$ of $d'$ edges incident with $v$ such that $G_H-E$ is bipartite. Since $d-d'$ is even, all connected components of $G_H-E$ have even size. We add the edges in $E$ to the union of the graphs in $\mathcal{H}\setminus \{H\}$ to obtain a graph $G'$. By the induction hypothesis, we can decompose $G'$ into $\lceil\log_2 d'\rceil +1$ bipartite graphs where every component has even size. Together with $G_H-E$, this is a collection of $\lceil\log_2 d'\rceil +2 = \lceil\log_2 d\rceil +1$ such graphs.
\end{proof}

Notice that the bound $\lceil\log_2 d\rceil +1$ can in general not be descreased by more than 1. The complete graph $K_{d+1}$ is $d$-degenerate and at least $\lceil \log_2 (d+1) \rceil$ bipartite graphs are needed to decompose it. Moreover, we might need more bipartite graphs to achieve that all components have even size. For example, the complete graph $K_4$ can be decomposed into two bipartite graphs, but three bipartite graphs are necessary to achieve even component sizes.

\begin{theorem}\label{thm-deg}
Let $d\geq 1$ be a natural number. If $G$ is a $d$-degenerate graph in which every connected component has even size, then $G$ can be decomposed into $\lceil \log_2 (d+1) \rceil + 1$ bipartite graphs in which all connected components have even size.
\end{theorem}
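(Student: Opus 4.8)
The plan is to process the vertices of $G$ in a degeneracy order and build the decomposition by adding one vertex at a time. First I would reduce to the case that $G$ is connected of even size: distinct components can be decomposed independently and their colour classes combined, and the edgeless graph is a trivial base case. Fixing a degeneracy order $v_1,\dots,v_n$, each $v_i$ has at most $d$ neighbours among $v_1,\dots,v_{i-1}$; writing $G_i = G[\{v_1,\dots,v_i\}]$, let the back-degree $b_i$ be this number of earlier neighbours. The target is to maintain, throughout the process, a decomposition of $G_i$ into $\ell = \lceil\log_2(d+1)\rceil + 1$ bipartite graphs, every component of which has even size.

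For the \emph{bipartite} half of the invariant I would use Lemma~\ref{lemma-observ} repeatedly: when $v_i$ is added with its $b_i \le d$ back-edges, one distributes these edges among the colour classes so that each class stays bipartite. Since each bipartite class can absorb at least half of the edges presented to it without creating an odd cycle (Lemma~\ref{lemma-observ}), $\lceil\log_2(d+1)\rceil$ classes already suffice to keep every class bipartite, which is exactly where this term of the bound comes from. The role of the extra colour, and of Lemma~\ref{lemma-indstep}, is to control the \emph{even-size} half of the invariant simultaneously: Lemma~\ref{lemma-indstep} is precisely the statement that an even-degree vertex can be inserted into an even-component bipartite decomposition without increasing the number of colours, so whenever $b_i$ is even it lets me re-add $v_i$ while keeping all components even.

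The main obstacle is the parity of $b_i$. When $b_i$ is odd, inserting $v_i$ necessarily turns an even component into an odd one, so $G_i$ acquires a component of odd size; Lemma~\ref{lemma-indstep} does not apply, and one cannot simply postpone the repair, since the defect propagates to later insertions. That no purely local fix exists is shown already by $K_4$: every elimination order ends at a vertex of odd back-degree $3$, so there is no ordering in which all back-degrees are even, yet $K_4$ still needs the full $\lceil\log_2 4\rceil + 1 = 3$ colours. The way I would overcome this is to treat the extra colour as a parity reservoir: first produce the bipartite decomposition into $\lceil\log_2(d+1)\rceil$ classes as above, and then, using that $G$ has even size, correct the odd components globally. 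Concretely, in each class and each odd component I would invoke Lemma~\ref{lemma-oneedge} to select one edge whose deletion splits that component into even pieces, move all selected edges into the reservoir class, and argue --- this being the delicate point --- that the selections can be coordinated so that the reservoir class is itself bipartite with all components of even size. Making this last coordination work, so that exactly one extra colour always suffices, is the crux of the proof.
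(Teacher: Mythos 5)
Your proposal correctly identifies the two ingredients (Lemma~\ref{lemma-observ} for bipartiteness, Lemma~\ref{lemma-indstep} for even-degree insertion) and correctly isolates the real difficulty, namely vertices of odd back-degree. But the step you offer to resolve it --- the global ``parity reservoir'' repair --- is exactly the part you leave unproven, and in the form stated it does not work. After your first phase, the number of odd-size components spread over the $\lceil\log_2(d+1)\rceil$ classes is even (since $|E(G)|$ is even), but Lemma~\ref{lemma-oneedge} only lets you choose, \emph{within each odd component separately}, one edge whose removal leaves even pieces; it gives you no control relating selections made in different components or different classes. Two odd components lying in different colour classes can be vertex-disjoint, in which case the two selected edges are independent edges, and the reservoir class acquires two components of size $1$ --- both odd. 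No choice permitted by Lemma~\ref{lemma-oneedge} avoids this, since the lemma only controls which endvertex inside its own component the chosen edge meets. Bipartiteness of the reservoir is likewise uncontrolled (selected edges from many classes can close odd cycles). So the one-extra-colour repair cannot be done by local edge selections after the fact; the defect has to be prevented, not repaired.

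This is where the paper's proof differs essentially. It runs an induction on a smallest counterexample and never allows more than one odd component to exist, and only at a controlled location. Choosing $v$ of minimum degree among vertices of degree greater than $1$ (so $d(v)\leq d+1$), the even-degree case is your case and is handled by Lemma~\ref{lemma-indstep}. In the odd-degree case, the paper first attaches an auxiliary pendant edge $uw$ at a neighbour $u$ of $v$, decomposes $G-v+uw$ inductively, and thereby obtains a decomposition of $G-v$ whose \emph{unique} odd component lies in a known class $H$ and is incident with $u$. It then inserts an odd number $d(v)-d'$ of the edges at $v$ into $H$ (keeping an edge to that odd component, which fixes its parity), and the remaining even number $d'$ of edges are absorbed by Lemma~\ref{lemma-indstep} into the other $\lceil\log_2(d+1)\rceil$ classes. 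In other words, the parity defect is created deliberately, adjacent to where $v$ will arrive, and is cancelled at the moment $v$ is inserted --- precisely the coordination your global reservoir cannot supply. Your proposal as written therefore has a genuine gap at its self-declared crux.
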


\begin{proof}
Suppose not, and let $G$ be a smallest counterexample. Clearly $G$ is connected.

\begin{claim*}
If $v$ is a cutvertex of $G$, then $v$ is adjacent to precisely one vertex $u$ of degree 1 and $G-u-v$ is connected.
\end{claim*}

To prove the claim, suppose there exists a $1$-separation $\{V_1,V_2\}$ of $G$ with $V_1\cap V_2 =\{v\}$ and $|V_1|$, $|V_2|\geq 3$.
If $G[V_1]$ and $G[V_2]$ have even size, then we can decompose $G[V_1]$ and $G[V_2]$ by induction. If $G[V_1]$ and $G[V_2]$ have odd size, then we construct two new graphs $H_1$ and $H_2$ by adding a new vertex $v_i$ to $G[V_i]$ together with the single edge $vv_i$. Since $|V_1|$, $|V_2|\geq 3$, both $H_1$ and $H_2$ are smaller than $G$ so we can decompose them by induction. We think of the decomposition as an edge-colouring, and we permute colours so that the edges $vv_i$ receive the same colour in both subgraphs. This corresponds to a colouring of $G$ in which every monochromatic component is bipartite and of even size.
This proves the claim.

\vspace{3mm}

In particular, every vertex is adjacent to at most one vertex of degree 1.
Among all vertices of degree greater than 1, let $v$ be one of minimal degree. Since $G$ is $d$-degenerate, we have $d(v)\leq d+1$. Suppose first that $d(v)$ is even. Since $G$ is a smallest counterexample, we can decompose $G-v$ into $\lceil \log_2 (d+1) \rceil + 1$ bipartite graphs in which all connected components have even size. By Lemma~\ref{lemma-indstep}, this gives rise to the desired decomposition of $G$.

We may thus assume that $d(v)$ is odd. Set $d'=\frac{1}{2}(d(v)-1)$ if $d(v)$ is congruent to 1 modulo 4, and $d'=\frac{1}{2}(d(v)+1)$ otherwise.
Notice that $d'$ is even and $\lceil\log_2 (d+1)\rceil \geq \lceil\log_2 d'\rceil +1$.
Let $u$ be a neighbour of $v$ of degree greater than $1$. If $G-v$ has an isolated vertex, then we let $w$ denote that vertex. Otherwise we add an isolated vertex $w$. The graph $G-v+uw$ has even size, so we can decompose it as in the previous case. This gives us a decomposition of $G-v$ into $\lceil \log_2 (d+1) \rceil + 1$ bipartite graphs in which all connected components are of even size, apart from one component of odd size which is incident with $u$. Let $H$ be the bipartite subgraph of odd size, and let $H_o$ be the connected component of odd size.
Let $G_H$ be the graph we get by adding $v$ and all its incident edges to $H$.
By Lemma~\ref{lemma-observ}, there exists a set $E$ of precisely $d'$ edges incident with $v$ such that $G_H-E$ is bipartite. We may assume that $E$ does not contain all edges that are incident with $H_o$. Since $d(v)-d'$ is odd, all connected components of $G_H-E$ have even size. We add the edges in $E$ to $G-v-E(H)$ to obtain a graph $G'$. Notice that $G-v-E(H)$ is the union of $\lceil\log_2 (d+1)\rceil$ bipartite graphs with components of even size.
By Lemma~\ref{lemma-indstep}, we can decompose $G'$ into $\lceil\log_2 (d+1) \rceil $ bipartite graphs where every component has even size. Together with $G_H-E$, this is a collection of $\lceil\log_2 (d+1)\rceil +1$ such graphs.
\end{proof}

Now we can use our result on bipartite graphs to get an upper bound on the irregular chromatic index of $d$-degenerate graphs.

\begin{corollary}\label{cor-deg}
If $G$ is a connected $d$-degenerate graph of even size, then
$$\chi'_{\rm irr}(G) \leq 9(\lceil\log_2 (d+1)\rceil +1)\,.$$
\end{corollary}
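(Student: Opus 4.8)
The plan is to combine the decomposition result Theorem~\ref{thm-deg} with the bipartite bound Theorem~\ref{thm-bipeven}, so that the corollary follows by simply multiplying the two constants. Given a connected $d$-degenerate graph $G$ of even size, Theorem~\ref{thm-deg} applies directly, since $G$ itself is a $d$-degenerate graph all of whose components (there is only one, as $G$ is connected) have even size. It therefore lets me decompose $E(G)$ into $\lceil\log_2(d+1)\rceil+1$ bipartite subgraphs, each of which has the crucial extra property that \emph{every} one of its connected components has even size.

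Next I would bound the irregular chromatic index of each of these bipartite subgraphs separately. Fix one such subgraph $B$ and consider its connected components. Each component is a connected bipartite graph of even size, so Theorem~\ref{thm-bipeven} applies and provides a locally irregular edge-colouring of it using at most $9$ colours. Since distinct components of $B$ are vertex-disjoint, I may reuse the same palette of $9$ colours on every component: within a single colour class the contributions coming from different components of $B$ lie in different connected components, so their union is again locally irregular. Hence each bipartite subgraph $B$ in the decomposition satisfies $\chi'_{\rm irr}(B)\leq 9$.

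Finally I would assemble the colourings. The $\lceil\log_2(d+1)\rceil+1$ bipartite subgraphs are edge-disjoint but may share vertices, so I cannot reuse colours across them; instead I assign each of them a fresh palette of $9$ colours. Taking the union of all these colourings yields a locally irregular edge-colouring of $G$ using at most $9(\lceil\log_2(d+1)\rceil+1)$ colours, which is exactly the claimed bound.

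Because every ingredient is already in place, there is no genuinely hard step here; the only points requiring care are verifying that the hypotheses of the two cited theorems are met precisely — in particular that each component handed to Theorem~\ref{thm-bipeven} has even size, which is guaranteed by the stronger conclusion of Theorem~\ref{thm-deg} rather than by mere bipartiteness — and correctly distinguishing colour reuse \emph{within} a single bipartite subgraph (permitted, thanks to the disjointness of its components) from colour reuse \emph{across} the different subgraphs (not permitted, which is what produces the factor $\lceil\log_2(d+1)\rceil+1$).
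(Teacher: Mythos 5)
Your proposal is correct and takes essentially the same approach as the paper, which derives the corollary directly from Theorems~\ref{thm-deg} and~\ref{thm-bipeven}. Your write-up simply makes explicit the routine details (colour reuse across vertex-disjoint components within one bipartite subgraph, fresh palettes across the edge-disjoint subgraphs) that the paper leaves implicit in its one-line proof.
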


\begin{proof}
This follows immediately from Theorems~\ref{thm-bipeven} and~\ref{thm-deg}.
\end{proof}

To get a constant upper bound for decomposable graphs in general, we combine Corollary~\ref{cor-deg} with Przyby{\l}o's result on graphs with large minimum degree. For this purpose, we need the following lemma.

\begin{lemma}\label{lemma-deg+min}
Let $d$ be a natural number. If $G$ is a connected graph of even size, then $G$ can be decomposed into two graphs $D$ and $H$ such that $D$ is $2d$-degenerate, every connected component of $D$ has even size, and the minimum degree of $H$ is at least $d$.
\end{lemma}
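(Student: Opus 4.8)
The plan is to extract a high-minimum-degree ``core'' from $G$ and to route the remaining edges into $D$, so that $H$ is the core while $D$ collects the edges incident with the vertices peeled off during the extraction. First I would repeatedly delete, from a current graph initialised to $G$, any vertex whose current degree is less than $d$, placing all of its remaining incident edges into $D$. When this process halts, the remaining graph $H$ has minimum degree at least $d$ (or is empty), and I set $D=G-E(H)$. Ordering the deleted vertices by the order of deletion and placing the vertices of $H$ last, every peeled vertex has fewer than $d$ neighbours appearing later, while the vertices of $H$ have none among the edges of $D$; hence $D$ is $(d-1)$-degenerate, and in particular $2d$-degenerate. If the core is empty, then $G=D$ is already $(d-1)$-degenerate and forms a single component of even size, so I would be done; thus I may assume $H\neq\emptyset$.

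The one property that the naive peeling does not guarantee is that every component of $D$ has even size, and this is the heart of the argument. Since $|E(G)|$ is even, any valid decomposition forces $|E(H)|$ to be even as well; indeed the number of odd-size components of $D$ is congruent to $|E(H)|\bmod 2$, so I must first arrange $|E(H)|$ to be even and then pair up and eliminate the odd components. To do this I would move a small, carefully chosen set $A$ of core-internal edges out of $H$ and into $D$. Moving an edge $xy$ with both endpoints in the core preserves $\delta(H)\geq d$ provided $x$ and $y$ still have core-degree exceeding $d$; by connectivity of $G$ the components of $D$ are pairwise linked through core edges, so such a move can either toggle the parity of a single odd component (when the edge lies inside it) or merge an odd component with an even one. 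Because each move adds only a bounded number of edges at any core vertex, the degeneracy of $D$ stays comfortably below $2d$, which is precisely the slack the statement provides.

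The main obstacle is this parity bookkeeping together with the preservation of $\delta(H)\geq d$, and in particular the tight case in which the core is $d$-regular and offers no spare degree for any internal move. I would separate two situations. If some relevant core component is not $d$-regular, then it contains a vertex of degree exceeding $d$, hence a movable internal edge, which flips the parity of $|E(H)|$ and repairs odd components as above. If every relevant core component is $d$-regular, then no internal edge is movable; but now such a component is itself only $d$-degenerate, and since every subgraph of minimum degree at least $d$ must contain it entirely, I am free to discard a whole $d$-regular component of odd size into $D$, flipping the parity of $|E(H)|$ while keeping $D$ at most $2d$-degenerate. In the extreme case of a single connected regular core this is just $H=\emptyset$ and $D=G$, which is legitimate because then $G$ is $d$-degenerate with one component of even size. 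Combining the generic parity-fixing moves with this structural fallback, and tracking component parities via a spanning tree of the auxiliary graph whose nodes are the components of $D$ and whose edges are the movable core edges, yields the required decomposition; the delicate point throughout is to keep $\delta(H)\geq d$ while forcing every component of $D$ to have even size.
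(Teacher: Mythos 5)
Your approach has the right skeleton (greedy peeling followed by a parity repair), but the repair phase contains genuine gaps, and they all trace back to one decision: you peel at threshold $d$, which leaves the core $H$ with no slack whatsoever. First, your claim that a non-$d$-regular core component contains a movable internal edge is false: an edge $xy$ is movable only if \emph{both} endpoints have degree exceeding $d$, and a non-regular component can easily have every edge incident to a degree-$d$ vertex (take one vertex of large degree all of whose neighbours have degree exactly $d$). Second, even when movable edges exist, you need them to be incident to the particular odd components of $D$ being repaired; two odd components of $D$ need not be joined by any single core edge, so ``pairing them up'' may force chains of moves through even components, and nothing in your argument bounds how many moved edges touch one core vertex --- since movability must be re-checked after every move, the slack can be exhausted exactly where the next move is needed. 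Third, the fallback of discarding an odd-size $d$-regular component only toggles the parity of $|E(H)|$; it does nothing for odd components of $D$ that are disjoint from the discarded component, so it cannot ``pair up and eliminate'' them, and if all core components happen to be $d$-regular of even size there is no odd-size component to discard at all.

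The missing idea is to create slack up front: the paper peels vertices of degree at most $2d$, so that $D$ is $2d$-degenerate and the core has minimum degree at least $2d+1$ --- this factor of two is precisely why the lemma trades $2d$-degeneracy of $D$ for minimum degree $d$ in $H$. The parity repair is then local and uniform: every odd component $C$ of $D$ meets $V(H)$ (else it would be a component of the connected graph $G$), one picks a representative $v(C)\in V(C)\cap V(H)$, with distinct representatives for distinct components, fixes an almost-balanced orientation of $H$, and moves into $D$ one \emph{out}-edge of $H$ at each $v(C)$. Each odd component gains exactly one edge, so every component of the new $D$ has even size no matter how components merge (each constituent old component contributes an even count), and each core vertex loses at most one out-edge plus possibly some in-edges, so its remaining degree is still at least its out-degree minus one, which the high minimum degree makes at least $d$; the added edges have pairwise distinct tails, hence form a $2$-degenerate graph on $V(H)$, keeping $D$ $2d$-degenerate. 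Your scheme, by contrast, must fight for every single edge move, and the cases where it gets stuck are not resolved.
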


\begin{proof}
Starting from $D=\emptyset$ and $H=G$, we remove vertices of degree at most $2d$ from $H$ and add them to $D$. Once this process stops, the graph $D$ is $2d$-degenerate and $H$ has minimum degree at least $2d+1$. Every connected component $C$ of $D$ with odd size intersects $H$; let $v(C)$ be a vertex in the intersection. Notice that $v(C)\neq v(C')$ for different connected components $C$ and $C'$ of $D$. We choose an almost-balanced orientation of $H$, i.e. an orientation where the out-degree and in-degree at every vertex differ by at most 1. For each connected component $C$ of odd size, we choose an out-edge $e(C)$ at $v(C)$ in $H$. We remove $e(C)$ from $H$ and add it to $D$. Since every vertex in $H$ might lose all of its in-edges but at most one out-edge, the minimum degree in $H$ remains at least $d$. The edges we add to $D$ in this step induce a 2-degenerate subgraph, so $D$ will still be $2d$-degenerate. Moreover, every connected component of odd size gains an edge and possibly gets joined to other connected components of even size. In any case, all connected components of $D$ now have even size.
\end{proof}

Now we are ready for the proof of our main result.

\begin{theorem} \label{main-theorem}
If $G$ is a decomposable graph, then $\chi'_{\rm irr}(G) \leq 328\,.$
\end{theorem}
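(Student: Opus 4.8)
The plan is to assemble the results established in the previous sections, so the argument is essentially one of reduction and bookkeeping. First I would reduce to connected graphs: if $G$ is disconnected, then each component is again decomposable, and since colour classes in distinct components never interact, I may colour the components independently and reuse colours across them, so that $\chi'_{\rm irr}(G)$ equals the maximum of $\chi'_{\rm irr}$ over its components. Hence I may assume $G$ is connected. If $G$ has odd size, I would invoke Theorem~\ref{thm-odd} to extract a single locally irregular subgraph $H_0$ (a claw, or a claw with two subdivided edges) whose removal leaves a graph all of whose components have even size; this costs exactly one colour. As every connected graph of even size is decomposable, it then suffices to bound $\chi'_{\rm irr}$ on each even-size component and add~$1$ for $H_0$. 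This reduces everything to the case of a connected graph of even size.

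For that core case, let $G$ be connected of even size and set $d=10^{10}$. By Lemma~\ref{lemma-deg+min} I would split $G$ into a graph $H$ of minimum degree at least $d=10^{10}$ and a $(2d)$-degenerate graph $D$ in which every component has even size. Applied componentwise, Theorem~\ref{theorem:przybylo} gives $\chi'_{\rm irr}(H)\leq 3$. For $D$, Theorem~\ref{thm-deg} with degeneracy $2d=2\cdot 10^{10}$ produces a decomposition into $\lceil\log_2(2\cdot 10^{10}+1)\rceil+1$ bipartite graphs in which every component has even size. Each such bipartite graph is, componentwise, a connected bipartite graph of even size, so Theorem~\ref{thm-bipeven} bounds its irregular chromatic index by $9$; once more colours may be reused across its components.

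It remains to count. Since $2^{34}<2\cdot 10^{10}+1<2^{35}$, we have $\lceil\log_2(2\cdot 10^{10}+1)\rceil=35$, so $D$ decomposes into $35+1=36$ bipartite graphs, contributing at most $9\cdot 36=324$ colours, while $H$ contributes at most $3$. Thus a connected graph of even size satisfies $\chi'_{\rm irr}\leq 3+9\cdot 36=327$, and adding the single colour used for $H_0$ in the odd-size reduction yields the claimed bound $328$. I do not expect a genuine obstacle here, since all of the mathematical substance lives in the results already proved; the only points requiring care are the correct reuse of colours across disconnected pieces and the verification that the chosen value $d=10^{10}$ simultaneously satisfies the hypothesis of Theorem~\ref{theorem:przybylo} and keeps the logarithmic term (hence the number of bipartite graphs) equal to $36$, which is exactly what pins down the final constant.
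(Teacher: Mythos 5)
Your proposal is correct and takes essentially the same route as the paper: reduce to connected graphs of even size via Theorem~\ref{thm-odd}, split such a graph via Lemma~\ref{lemma-deg+min} into a part of minimum degree $10^{10}$ handled by Theorem~\ref{theorem:przybylo} and a $(2\cdot 10^{10})$-degenerate part handled by Theorem~\ref{thm-deg} combined with Theorem~\ref{thm-bipeven} (which is exactly Corollary~\ref{cor-deg}), giving $3+9\cdot 36=327$ colours plus one for the odd-size reduction. The only difference is cosmetic: you make explicit the colour reuse across components and the computation $\lceil\log_2(2\cdot 10^{10}+1)\rceil=35$, both of which the paper leaves implicit.
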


\begin{proof}
By Theorem~\ref{thm-odd} it suffices to show that $\chi'_{\rm irr}(G) \leq 327$ holds for connected graphs $G$ of even size. By Lemma~\ref{lemma-deg+min}, we can decompose $G$ into two graphs $D$ and $H$ such that $D$ is $(2\cdot 10^{10})$-degenerate, every connected component of $D$ has even size, and the minimum degree of $H$ is at least $10^{10}$. By Theorem~\ref{theorem:przybylo}, we have $\chi_{\rm irr}'(H)\leq 3$ and by Corollary~\ref{cor-deg} we have $$\chi_{\rm irr}'(D)\leq 9(\lceil\log_2 (2\cdot 10^{10}+1)\rceil +1) = 324\,.$$
Hence, $\chi_{\rm irr}'(G)\leq \chi_{\rm irr}'(H) + \chi_{\rm irr}'(D)\leq 3 + 324 = 327$.
\end{proof}

\section{Decomposing highly edge-connected bipartite graphs} \label{section:highly-connected}

Our bound on the irregular chromatic index of decomposable graphs in Theorem~\ref{main-theorem},
depends partly on the irregular chromatic index of bipartite graphs.
In particular, decreasing our bound in Theorem~\ref{thm-bipeven} from~$9$ down to~$3$ (as suggested by Conjecture~\ref{conj-3})
would already yield an improvement on the constant in Theorem~\ref{main-theorem}.
The restriction of Conjecture~\ref{conj-3} to bipartite graphs, however,
appears to be a surprisingly non-trivial problem.

Another interesting question concerning bipartite graphs and locally irregular decompositions
is about whether the family of bipartite graphs with irregular chromatic index at most~$2$
admits an ``easy'' characterisation.
It is legitimate to raise this question, as trees admit such a characterisation, see~\cite{BBS15}.
We also note that a similar study of bipartite graphs $G$ satisfying $\chi'_\Sigma(G) \leq 2$
was recently conducted by Thomassen, Wu, and Zhang~\cite{TWZ},
resulting in such a characterisation.
So far all bipartite graphs $G$ with irregular chromatic index 3 we know have minimum degree 1 or 2. It could be the case that minimum degree 3 already suffices to push the irregular chromatic index down to 2 for bipartite graphs. Notice that $d$-regular bipartite graphs $G$ with $d\geq 3$ do indeed satisfy $\chi'_{\rm irr}(G) = 2$, as was shown in~\cite{BBPW15}.

\begin{question} \label{conjecture:bip-degree-3}
Does there exist a bipartite graph $G$ with minimum degree at least 3 and $\chi'_{\rm irr}(G) > 2$?
\end{question}

In this section we prove that $16$-edge-connected bipartite graphs have irregular chromatic index at most~$2$.
The main tool is the following result on factors modulo~$k$ in bipartite graphs, due to Thomassen~\cite{Tho14}. That result was based on the proof of the weak version of Jaeger's Circular Flow Conjecture in~\cite{Tho12}. The quadratic bound on the edge-connectivity in~\cite{Tho12} was improved to a linear bound in \cite{LTZW}.

\begin{theorem}[\cite{Tho14}] \label{theorem:factor}
Let $k$ be a natural number, and let $G$ be a $(3k-2)$-edge-connected bipartite graph with partition classes $A$ and $B$. 
If $f:V(G)\rightarrow \mathbb{Z}$ is a function satisfying 
$$ \sum_{v\in A} f(v) \equiv \sum_{v\in B} f(v) \pmod k \,,$$
then $G$ has a spanning subgraph $H$ with $d(v, H) \equiv f(v) \pmod k$ for every $v\in V(G)$.
\end{theorem}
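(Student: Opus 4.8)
The plan is to translate the subgraph-existence statement into an orientation statement, and then attack the latter by the contraction–induction method underlying the flow results of \cite{Tho12,LTZW}. First I would record the reformulation. Encode a spanning subgraph $H$ as an orientation $O$ of $G$ by orienting each edge $ab$ (with $a\in A$, $b\in B$) from $a$ to $b$ when $ab\in E(H)$, and from $b$ to $a$ otherwise. Writing $d^+(v)$ for the out-degree in $O$, this gives $d^+(a)=d(a,H)$ for $a\in A$ and $d^+(b)=d(b,G)-d(b,H)$ for $b\in B$. Hence finding $H$ with $d(v,H)\equiv f(v)\pmod k$ is equivalent to finding an orientation with $d^+(v)\equiv g(v)\pmod k$ for all $v$, where $g(a)=f(a)$ and $g(b)=d(b,G)-f(b)$. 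Since $\sum_v d^+(v)=|E(G)|$ in any orientation, the natural necessary condition is $\sum_v g(v)\equiv |E(G)|\pmod k$, and a one-line computation using $\sum_{b\in B}d(b,G)=|E(G)|$ shows this is exactly the hypothesis $\sum_{A}f\equiv\sum_{B}f\pmod k$. Thus, up to this consistency condition, the problem becomes a prescribed modulo-$k$ out-degree orientation problem of the kind studied via flows, and bipartiteness is precisely what yields the clean consistency condition.

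I would then prove the orientation statement by induction on $|V(G)|$, after strengthening it in the standard ``one free vertex'' way: fix a vertex $z$, prescribe $d^+(v)\equiv g(v)\pmod k$ only for $v\neq z$, and recover the congruence at $z$ automatically from the consistency condition. The local engine is the observation that if, while building the orientation, a vertex $v\neq z$ still has at least $k-1$ undecided incident edges, then I can orient those edges so as to place $d^+(v)$ in any prescribed residue class modulo $k$, because the number of $m\geq k-1$ free edges pointing out of $v$ realises every residue. So if $V(G)\setminus\{z\}$ could be ordered $v_1,\dots,v_{n-1}$ with each $v_i$ sending at least $k-1$ edges to $\{v_{i+1},\dots,v_{n-1}\}\cup\{z\}$, I would simply process the vertices in this order, fixing each congruence using the forward edges, after which $z$ is automatically correct.

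The crux is that no such ordering follows from edge-connectivity by naive peeling: induced subgraphs of a highly edge-connected graph can be arbitrarily sparse, so at some stage every remaining non-$z$ vertex may have fewer than $k-1$ edges into the remaining part. This is exactly where the $(3k-2)$-edge-connectivity must be spent, and where I would import the contraction–induction technique of \cite{Tho12,LTZW}. The idea is a dichotomy: either there is a vertex whose degree is close to the connectivity bound, which can be eliminated by rerouting along many edge-disjoint paths to $z$; or the cut structure forces a nontrivial vertex set $S$ that can be contracted to a single vertex while keeping the graph $(3k-2)$-edge-connected. In the contraction case I would solve the smaller instance by induction and then lift the solution back by orienting the edges inside $G[S]$ to correct the residues there, using that $S$ itself inherits enough connectivity.

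I expect the contraction lemma to be the main obstacle. Making it work requires choosing $S$ so that both the contracted graph and the piece $G[S]$ with its boundary retain the inductive hypotheses, and verifying that $3k-2$ is the right budget: roughly $2(k-1)$ edge-connectivity is morally what the local engine needs, while the extra additive term is the overhead guaranteeing that connectivity survives each contraction and that the prescribed residue can always be realised on the contracted vertex. Getting this accounting exactly right — rather than merely the qualitative existence of the reduction — is the delicate part, and is the content that \cite{Tho12} established with a quadratic bound and \cite{LTZW} sharpened to the linear form used here.
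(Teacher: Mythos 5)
First, for calibration: the paper does not prove this statement at all --- it is imported from \cite{Tho14} (with the remark, via page 11 of \cite{LTZW}, that for even $k$ the edge-connectivity must be $3k-2$ rather than $3k-3$), so your proposal should be measured against the proof in \cite{Tho14}. Your opening reduction is correct and is indeed the right first move, matching the setup of the cited work: encoding $H$ by orienting $ab$ (with $a\in A$, $b\in B$) from $a$ to $b$ exactly when $ab\in E(H)$ turns the factor problem into an orientation problem with all out-degrees prescribed modulo $k$, and your verification that $\sum_v g(v)\equiv |E(G)| \pmod k$ is equivalent to $\sum_{v\in A} f(v)\equiv \sum_{v\in B} f(v) \pmod k$ is right. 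The ``local engine'' is also correct: with $m\geq k-1$ undecided edges at $v$, the number oriented out of $v$ can be any value in $\{0,\dots,m\}$, which covers all residues modulo $k$.

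The gap is that everything after this is a description of a proof rather than a proof, and the one concrete mechanism you do assert is unsound. You cannot contract a nontrivial set $S$, solve the smaller instance, and then ``lift the solution back by orienting the edges inside $G[S]$\ldots using that $S$ itself inherits enough connectivity'': induced subgraphs of highly edge-connected graphs inherit no edge-connectivity whatsoever (your own sparsity remark one paragraph earlier makes exactly this point), and this is precisely why \cite{Tho12,LTZW} instead work with a strengthened induction hypothesis featuring a special vertex $z$ of large prescribed degree at which the congruence is free, contracting only along small cuts or into $z$ with careful bookkeeping --- the part you explicitly leave open is the entire analytic content of the theorem. Note also that the tempting shortcut of invoking the modulo-$k$ orientation theorem of \cite{LTZW} as a black box cannot give the stated bound: prescribing $d^+(v)$ modulo $k$ amounts to prescribing $d^+(v)-d^-(v)=2d^+(v)-d(v)$ modulo $2k$, so the black box costs edge-connectivity about $6k-2$ rather than $3k-2$ (for the case $k=6$ actually used in this paper, that would be $34$ instead of $16$; the factor $2$ can only be cancelled when $k$ is odd). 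Obtaining $3k-2$ for all $k$ requires rerunning the contraction induction directly on the bipartite out-degree statement, which is what \cite{Tho14} does and what your proposal defers rather than supplies.
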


Theorem 1 in \cite{Tho12} assumes edge-connectivity $3k-3$. But this holds only for $k$ odd. For $k$ even it should be $3k-2$, see page 11 in \cite{LTZW}, and below we shall apply
Theorem \ref{theorem:factor} for $k=6$.

\begin{theorem}
For every $16$-edge-connected bipartite graph $G$, we have $\chi'_{\rm irr}(G) \leq 2$.
\end{theorem}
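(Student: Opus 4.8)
The plan is to produce a single spanning subgraph $H$ of $G$ with the property that both $H$ and its edge-complement $G-E(H)$ are locally irregular; colouring $E(H)$ with one colour and $E(G)\setminus E(H)$ with the other then gives $\chi'_{\rm irr}(G)\le 2$. Since $G$ is bipartite with classes $A$ and $B$, I would rely on two elementary sufficient conditions for local irregularity of a bipartite subgraph: it is locally irregular whenever every vertex of one class has odd degree and every vertex of the other has even degree (adjacent vertices then differ in parity), and also whenever the two classes have degrees lying in two distinct residue classes modulo $3$ (adjacent vertices are then non-congruent, hence distinct, modulo $3$); isolated vertices cause no trouble in either case. The idea is to use the parity condition for $H$ and the modulo-$3$ condition for $G-E(H)$. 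These two conditions are governed by coprime moduli, and because $d(w,H)$ and $d(w)-d(w,H)$ are complementary one cannot control both subgraphs with the same modulus; this is exactly why working modulo $6=2\cdot 3$, and hence the hypothesis of $16=3\cdot 6-2$ edge-connectivity, is the natural one.

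Concretely, I would prescribe a target $f\colon V(G)\to\mathbb{Z}$ and invoke Theorem~\ref{theorem:factor} with $k=6$. For $u\in A$ set $f(u)$ odd with $f(u)\equiv d(u)-\gamma\pmod 3$, and for $v\in B$ set $f(v)$ even with $f(v)\equiv d(v)-\delta\pmod 3$, where $\gamma\neq\delta$ in $\mathbb{Z}/3$ are constants to be fixed. Any spanning $H$ with $d(w,H)\equiv f(w)\pmod 6$ then has all of $A$ of odd degree and all of $B$ of even degree, so $H$ is locally irregular, while in $G-E(H)$ the complementary degree $d(w)-d(w,H)$ is $\equiv\gamma\pmod 3$ on $A$ and $\equiv\delta\pmod 3$ on $B$, so $G-E(H)$ is locally irregular as well. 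It then remains to check the compatibility condition $\sum_{A}f\equiv\sum_{B}f\pmod 6$ required by Theorem~\ref{theorem:factor}. Modulo $2$ this reduces (since each $f(u)$ is odd and each $f(v)$ even) to $|A|\equiv 0$, or, after exchanging the roles of $A$ and $B$, to $|B|\equiv 0$; modulo $3$ it reads $|A|\gamma\equiv|B|\delta$, which I can arrange by a suitable admissible choice of the distinct residues $\gamma,\delta$ in all but the exceptional case $|A|\equiv|B|\not\equiv 0\pmod 3$.

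The main obstacle is that the compatibility condition can fail unavoidably, most notably when $|A|$ and $|B|$ are both odd. This is not a defect of the method: because $\sum_{A}d(w,H)=\sum_{B}d(w,H)=|E(H)|$ for every subgraph $H$, no $H$ can make all of $A$ odd and all of $B$ even once $|A|$ is odd, and similarly the exceptional modulo-$3$ case is genuinely unrealisable with uniform targets. I would resolve this by allowing a single distinguished vertex $z$ to deviate from the target: since changing $f(z)$ by a chosen element of $\mathbb{Z}/6$ shifts $\sum_{A}f-\sum_{B}f$ by that amount, one such correction restores the compatibility condition outright. The delicate point is to re-establish local irregularity at $z$, where its degrees modulo $6$ in $H$ and in $G-E(H)$ may now collide with those of its neighbours. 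Here I would exploit the remaining freedom modulo $6$: the neighbours of $z$ occupy only a bounded number of residue classes on each side, so only a few values of $f(z)$ are forbidden, and I expect that a judicious choice of $z$ (or a small preliminary modification of $G$ altering the parity of a class while preserving high edge-connectivity) lets the correcting value avoid all of them. Coordinating this single exceptional vertex with the global sum correction is the technical heart of the argument; once it is in place, Theorem~\ref{theorem:factor} with $k=6$ yields the desired $H$ and the bound $\chi'_{\rm irr}(G)\le 2$ follows.
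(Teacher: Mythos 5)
Your first half is essentially the paper's own argument: the paper likewise invokes Theorem~\ref{theorem:factor} with $k=6$, prescribing targets so that one subgraph separates the two classes by residues modulo $6$ (there $f\in\{0,1\}$ on $A$ and $f\in\{2,3\}$ on $B$) while the complement separates them by parity, and it runs into exactly the obstruction you identify when $\sigma_A(f)\not\equiv\sigma_B(f)\pmod 6$. The gap is in your resolution of that obstruction. The correcting shift at your distinguished vertex $z$ is \emph{forced} -- it must equal the sum discrepancy -- so there is no freedom in ``the correcting value''; the only freedom is the choice of $z$ (plus the residues $\gamma,\delta$), and that is not enough in general. In the hardest case the discrepancy is odd (both $|A|$ and $|B|$ odd), so after correction $z$ has the same degree parity in $H$ as all of its neighbours, and local irregularity at $z$ can only be enforced through residues modulo $6$. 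But in a semiregular graph (e.g.\ $K_{n,n}$ with $n$ odd, $n\geq 16$) all vertices of a class have the same degree, hence the same target and the same corrected value: choosing $z$ ``judiciously'' gives literally no freedom, every neighbour of $z$ carries one and the same residue, and Theorem~\ref{theorem:factor} gives no control beyond residues, so a degree collision at $z$ cannot be excluded. Your parenthetical fallback (``a small preliminary modification of $G$'') is not a construction. You correctly call this coordination the technical heart of the argument, and it is precisely the part that is missing.

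The paper closes this gap with two further ideas. First, a dichotomy: if every vertex has at most one non-neighbour in the opposite class (which covers $K_{n,n}$ and its near-relatives), it gives a direct explicit decomposition -- remove the union of the two stars centred at two vertices of $B$; both that union and what remains are locally irregular -- with no use of the factor theorem at all. Otherwise there is a vertex $u$ with at least two non-neighbours in $B$, and this $u$ is taken as the deviating vertex: its target parity is flipped, then the targets of those neighbours $v$ of $u$ whose complementary residue would collide with $u$'s are shifted by $+2$ (which keeps them in the safe residue range $\{2,3,4,5\}$ and preserves their parity), and finally the sum condition modulo $6$ is restored by adding $2$ at one or two vertices of $B$ that are \emph{non-neighbours} of $u$ -- this is exactly why $u$ was chosen to have two non-neighbours. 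In other words, the repair is performed at the neighbours of the deviating vertex and the global correction at its non-neighbours, not at the deviating vertex alone; without this (or some substitute for it) your argument does not go through.
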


\begin{proof}
Let $A$ and $B$ denote the partition classes of $G$. Since $G$ is $16$-edge-connected, we have $|A|,|B|\geq 16$.

Suppose first that every vertex has at most one non-neighbour in the other partition class. Let us assume that $|A|\geq |B|$. If $|A|-|B|\geq 2$, then $G$ is already locally irregular. If $|A|-|B|\leq 1$, then let $H$ be a subgraph of $G$ consisting of two vertices in $B$ and all edges incident with one of these vertices. Clearly $H$ is locally irregular. Let $H'$ denote the subgraph $G-E(H)$. For $v\in A$, we have $d(v,H')\leq |B|-2 \leq |A|-2.$
For $v\in B$ we have $d(v,H')\geq |A|-1$, so $H'$ is locally irregular and $\chi'_{\rm irr}(G) \leq 2$.

Now suppose that $G$ contains a vertex $u$ with at least two non-neighbours in the other partition class. We may assume $u\in A$. We denote by $A_0$ and $B_0$ the subsets of $A$ and $B$ consisting of the vertices of even degree, and by $A_1$ and $B_1$, respectively, the subsets consisting of the vertices of odd degree.
For any function $g:V(G)\rightarrow \mathbb{Z}$, we define $\sigma_A(g) = \sum_{v \in A} g(v)$ and $\sigma_B(g) = \sum_{v \in B} g(v)$.
If $G$ is $(3k-2)$-edge-connected and $\sigma_A(g) \equiv \sigma_B(g)$ (mod $k$), then, by Theorem~\ref{theorem:factor}, there exists a subgraph $H$ of $G$ with $d(v,H)\equiv g(v)$ (mod $k$) for all $v\in V(G)$. We apply this for $k=6$.
We assign to each vertex $v\in V(G)$ an integer $f(v)$ in the following way:
\begin{itemize}
	\item for every $v \in A_0$, we set $f(v)=0$;
	\item for every $v \in A_1$, we set $f(v)=1$;
	\item for every $v \in B_0$, we set $f(v)=3$;
	\item for every $v \in B_1$, we set $f(v)=2$.
\end{itemize}
If $\sigma_A(f) \equiv \sigma_B(f)$ (mod $6$), then we apply Theorem~\ref{theorem:factor} to find a subgraph $H$ of $G$ with $d(v,H)\equiv f(v)$ (mod $6$) for all $v\in V(G)$. Since the degrees of the vertices in $A$ have different residues modulo $6$ than the degrees of the vertices in $B$, the graph $H$ is locally irregular. Let $H'$ denote the subgraph $G-E(H)$. The vertices in $A$ in $H'$ have even degrees, while the vertices in $B$ have odd degrees. Thus, also $H'$ is locally irregular and $\chi'_{\rm irr}(G) \leq 2$.

We may therefore assume $\sigma_A(f) \not\equiv \sigma_B(f)$ (mod $6$). First, suppose $\sigma_A(f) \equiv \sigma_B(f)$ (mod $2$). Then $\sigma_A(f) \equiv \sigma_B(f) + 2$ or $\sigma_A(f) \equiv \sigma_B(f) + 4$. Let $x$ and $y$ be two different vertices in $B$. We define two new functions $f_1$ and $f_2$ by setting $f_1(x)=f_2(x)=f(x)+2$, $f_2(y)=f(y)+2$ and setting $f_1(v)=f(v)$ for all $v\in V(G)\setminus\{x\}$ and $f_2(v)=f(v)$ for all $v\in V(G)\setminus\{x,y\}$. Now $\sigma_A(f_1)=\sigma_A(f_2)=\sigma_A(f)$ and $\sigma_B(f_2)= \sigma_B(f_1) + 2= \sigma_B(f)+4$. Thus, one of the functions $f_1$ or $f_2$ satisfies the condition in Theorem~\ref{theorem:factor} and the same argument as above yields a decomposition into two locally irregular subgraphs.

Finally, suppose $\sigma_A(f) \not\equiv \sigma_B(f)$ (mod $2$). We define a new function $g$ by setting $g(u)=1-f(u)$, where $u$ is the special vertex with at least two non-neighbours in $B$. We set $g(v)=f(v)$ for all $v\in A$ and all $v\in B$ that are non-neighbours of $u$. If $v\in B$ is a neighbour of $u$ with $d(v)-f(v)\not\equiv d(u)-g(u)$ (mod $6$), then we also set $g(v)=f(v)$. In the case that $v\in B$ is a neighbour of $u$ with $d(v)-f(v)\equiv d(u)-g(u)$ (mod $6$), we set $g(v)=f(v)+2$.
Now we have $\sigma_A(g) \equiv \sigma_A(f)+1$ and $\sigma_B(g)\equiv \sigma_B(f)$, so $\sigma_A(g) \equiv \sigma_B(g)$ (mod $2$). 

If $\sigma_A(g) \equiv \sigma_B(g)$ (mod $6$), then we use Theorem~\ref{theorem:factor} for the function $g$. Let $H$ be the subgraph of $G$ with $d(v,H)\equiv g(v)$ (mod $6$) for all $v\in V(G)$. Notice that $g(v)\in \{0,1\}$ for $v\in A$ and $g(v)\in \{2,3,4,5\}$ for $v\in B$, so $H$ is locally irregular. Let $H'$ denote the subgraph $G-E(H)$. Notice that in $H'$ all vertices in $B$ have odd degree, while $u$ is the only vertex in $A$ of odd degree. Notice that for every neighbour $v$ of $u$ we have $d(v,G)-g(v) \not\equiv d(u,G)-g(u)$ (mod $6$) by definition of $g$, so $u$ is not adjacent to a vertex of the same degree in $H'$. Thus, also $H'$ is locally irregular.

If $\sigma_A(g) \not\equiv \sigma_B(g)$ (mod $6$), then we define functions $g_1$ and $g_2$ as before by adding 2 to the values of one or two vertices in $B$, and use Theorem~\ref{theorem:factor} for the function $g_1$ or $g_2$. This time we only choose vertices that are non-neighbours of $u$, to make sure the value at a vertex never increases by more than 2 compared to its original $f$-value. As before, this yields a decomposition of $G$ into two locally irregular subgraphs.
\end{proof}

\end{document}